\newtheorem{theorem}{Theorem}
\newtheorem{lemma}{Lemma}
\begin{document}

\title{The distribution functions of $\sigma(n)/n$ and $n/\varphi(n)$, II}
\author{Andreas Weingartner}
\address{Department of Mathematics, Southern Utah University, Cedar City, Utah 84720}
\email{weingartner@suu.edu}

\maketitle

\section{Introduction}

Let
\begin{equation*}
A(t):= \lim_{N\rightarrow \infty}
\frac{1}{N} \left|\left\{n\le N: \sigma(n)/n \ge t \right\}\right|,
\end{equation*}
where $\sigma(n)$ is the sum of the positive divisors of $n$,
and
\begin{equation*}
B(t):= \lim_{N\rightarrow \infty}
\frac{1}{N} \left|\left\{n\le N: n/\varphi(n) \ge t \right\}\right|,
\end{equation*}
where $\varphi$ denotes Euler's totient function. 
Both of these limits exist and are continuous functions of $t$ \cite{Davenport, Schoenberg}.

We are interested in the size of $A(t)$ and $B(t)$
as $t$ tends to infinity.
From the work of Erd\H{o}s \cite{Erdos} it follows that
\begin{equation*}
B(t) = \exp\left\{ -e^{t \, e^{-\gamma}}
\left(1+o(1)\right) \right\} \qquad (t\rightarrow \infty),
\end{equation*}
which was sharpened and extended to $A(t)$ by the author \cite{Wei07} with the result
\begin{equation}\label{old}
A(t), B(t) = \exp\left\{ -e^{t \, e^{-\gamma}}
\left(1+O\left({t^{-2}}\right)\right) \right\} \qquad (t\rightarrow \infty)
\end{equation}
where $\gamma=0.5772...$ is Euler's constant.

The purpose of this note is to make further improvements to the error term. 

\begin{theorem}\label{thm1}
We have
\begin{equation*}
A(t), B(t) = \exp \left\{- e^{t \, e^{-\gamma}}\left(1 + \sum_{j=2}^m \frac{a_j}{t^j}+ O_m\left(\frac{1}{t^{m+1}}\right)\right)\right\},
\end{equation*}
where
\begin{equation*}
a_2 = -\frac{\pi^2}{6}\, e^{2\gamma}, \quad a_3= \frac{\pi^2}{6}\,e^{3\gamma}, \quad a_4=-\left(\frac{\pi^2}{6}+\frac{37\pi^4}{360}\right)\, e^{4\gamma}.
\end{equation*}
\end{theorem}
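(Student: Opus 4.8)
The plan is to reduce both $A(t)$ and $B(t)$ to tail probabilities in the usual probabilistic model. If $(X_p)_p$ are independent with $\mathbf{P}(X_p=1)=1/p$, then $B(t)=\mathbf{P}\!\bigl(\prod_{p\,:\,X_p=1}\tfrac{p}{p-1}\ge t\bigr)$, while $A(t)$ is the analogous probability once one also records the exponent to which each selected prime occurs. Passing from natural density to this model is a standard device (truncate at a large prime and estimate the tail of the remaining factors) and is implicit in the works cited in the excerpt. Writing $L=\log t$ and $\ell_p=\log\frac{p}{p-1}$, the event for $B$ becomes $\sum_p X_p\ell_p\ge L$, and the problem is to locate this tail with double-exponential precision.

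I would then isolate the dominant configuration. Let $y$ be the least prime with $\sum_{p\le y}\ell_p\ge L$ and put $h:=\sum_{p\le y}\ell_p-L\in[0,\ell_y)$. Conditioning on which primes $\le y$ divide $n$ yields the exact identity
\[
B(t)=\Bigl(\prod_{p\le y}p\Bigr)^{-1}\sum_{Q\subseteq\{p\le y\}}\Bigl(\prod_{q\in Q}(q-1)\Bigr)\,\mathbf{P}\!\Bigl(\textstyle\sum_{p>y}X_p\ell_p\ \ge\ \sum_{q\in Q}\ell_q-h\Bigr),
\]
whose $Q=\emptyset$ term equals $\bigl(\prod_{p\le y}p\bigr)^{-1}=e^{-\theta(y)}$ with $\theta(y)=\sum_{p\le y}\log p$. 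By Mertens' theorem with the classical error term, $\sum_{p\le y}\ell_p=\log\log y+\gamma+O(e^{-c\sqrt{\log y}})$, so $\log y=te^{-\gamma}(1+o(t^{-A}))$ and $\theta(y)=e^{te^{-\gamma}}(1+o(t^{-A}))$ for every $A\ge1$; in particular $\theta(y)$ supplies none of the $a_j$. Hence the whole expansion must come from the terms $Q\neq\emptyset$, i.e.\ from integers that miss some primes just below $y$ but are divisible by enough primes just above $y$ to compensate; and since $a_2=-\frac{\pi^2}{6}e^{2\gamma}<0$, this correction must be an enormous (double-exponential) factor, so the integers divisible by all primes $\le y$ form only a negligible part of those counted by $B(t)$.

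The crux is the precise evaluation of $\Sigma:=\sum_{Q\neq\emptyset}\bigl(\prod_{q\in Q}(q-1)\bigr)\,G\bigl(\sum_{q\in Q}\ell_q-h\bigr)$, where $G(x)=\mathbf{P}\bigl(\sum_{p>y}X_p\ell_p\ge x\bigr)$. The function $G$ is governed by effective forms of Mertens' theorem and the prime number theorem; when a single prime above $y$ can absorb the deficit one has $G(x)\approx1-\log y/\log(1/x)$, with $k$-fold analogues when several compensating primes are required. Substituting these, organizing the sum over $Q$ by the size of $\sum_{q\in Q}\log q$ and by the number of compensating primes, and tracking the fluctuating overshoot $h$, one should obtain $\log(1+\Sigma)=e^{te^{-\gamma}}\sum_{j=2}^m(-a_j)t^{-j}+O_m\!\bigl(e^{te^{-\gamma}}t^{-m-1}\bigr)$; then $\log(1/B(t))=\theta(y)-\log(1+\Sigma)$ gives the theorem for $B$. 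The $a_j$ emerge as values of convergent series, the $\pi^2$ (and the $\pi^4$ at $j=4$) reflecting sums of the shape $\sum_{k\ge1}k^{-2}$ (respectively a mixture of $\zeta(4)$ and $\zeta(2)^2$) produced when the compensation probabilities are unwound. For $A(t)$ the extremal integers are not squarefree: each small prime $p$ should occur to an exponent of order $\log(1/\mu)/\log p$ for an optimal $\mu$, and the density lost to these high powers is exactly offset by the primes one may then omit, so the same scale $y\sim\exp(te^{-\gamma})$ and the same expansion result; I would run this computation in parallel, carrying the extra factors $\prod_{p^a\|n}(1-p^{-a-1})$.

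The main obstacle will be exactly this evaluation of $\Sigma$: bounding $G$ sharply across the entire relevant range of its argument, handling the overshoot $h$ (which makes $\theta(y)$ jump and must cancel against jumps in $\Sigma$, since $B$ is continuous), and arranging the double sum over $Q$ and over the number of compensating primes so that it collapses into explicit constants. Obtaining the numerical values $a_2,a_3,a_4$ means pushing this bookkeeping through the first three nontrivial orders.
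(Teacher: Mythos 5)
Your setup is sound as far as it goes: the identity obtained by conditioning on the omitted primes $Q\subseteq\{p\le y\}$ is correct, the observation that $\theta(y)$ contributes nothing to the coefficients (because the strong Mertens/PNT error terms are exponentially small in $\sqrt{\log y}$) is right, and the target $\log(1+\Sigma)=-y\sum_{j\ge2}a_jt^{-j}+O_m(yt^{-m-1})$ is exactly what would be needed. But the proof stops precisely where the theorem's content begins. Everything stated about $\Sigma$ is heuristic: the approximation $G(x)\approx 1-\log y/\log(1/x)$ is only a first-order statement for a single compensating prime, whereas the weights $\prod_{q\in Q}(q-1)$ grow up to size $e^{y(1+o(1))}$, so subsets $Q$ of every cardinality, paired with arbitrarily many compensating primes above $y$, contribute to $\Sigma$ at the scale $\exp\{\pi^2y/(6\log^2y)\}$ you need to resolve. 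Extracting three explicit coefficients (through $t^{-4}$, including the $\frac{37\pi^4}{360}$) plus an error $O_m(yt^{-m-1})$ from this double sum amounts to carrying out a saddle-point analysis by hand in the "physical" domain, uniformly in the overshoot $h$, and nothing in the proposal indicates how that bookkeeping would actually close; the values of $a_2,a_3,a_4$ are asserted to "emerge," not derived. The treatment of $A(t)$ is likewise only sketched.

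For contrast, the paper sidesteps all of this by working in the transform domain: $B(t)\le W(s)/t^s$ for every $s\ge0$ (a Chernoff-type bound using \eqref{W}), and the Mellin identity \eqref{Mellin} gives the matching lower bound $W(s)/(3st^s)\le B(t)$ when $t$ maximizes $B(x)x^{s-1}$ (Lemma \ref{BW}), with Lemma \ref{gap} and monotonicity covering the remaining $t$. The product structure of $W(s)$ then linearizes the combinatorics you are trying to do by hand: $\log W(s)$ reduces to two Laplace-type integrals evaluated by a simple integration-by-parts recursion (Lemma \ref{Wz}), and the coefficients $a_j=-c_je^{j\gamma}$ fall out of an elementary minimization over $s$ (Lemma \ref{Wy}); $A(t)$ is handled by the short sandwich of Theorem \ref{AB} rather than a parallel computation with prime powers. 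If you want to pursue your direct route, the essential missing ingredient is a uniform, multi-term evaluation of $G$ (equivalently of the tail of $\sum_{p>y}X_p\ell_p$) sharp enough to sum against the exponentially large weights $\prod_{q\in Q}(q-1)$ with relative error $O(yt^{-m-1})$ in the logarithm --- which is, in effect, re-deriving the expansion of $\log W(s)$ without the transform that makes it tractable.
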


Additional coefficients $a_i$ can be determined without major difficulties by following the proofs of Lemma \ref{Wz}, Lemma \ref{Wy} and Section \ref{secthm}, starting with the coefficients $b_i$ from Lemma \ref{Wz}. 

Throughout we will use the notation
\begin{equation}\label{ydef}
y= y(t):=  e^{t \, e^{-\gamma}}.
\end{equation} 

We can further decrease the size of the error term in Theorem \ref{thm1} in exchange for a more complex main term. Let
\begin{equation}\label{Idef}
I(y,s) :=\int_e^{y} \log \left(1 + x\, e^{-s/x} \right) \frac{dx}{\log x} 
+ \int_{y}^{y\log y} \log \left(1 + x^{-1}\, e^{s/x}\right)\frac{dx}{\log x},
\end{equation} 
and
\begin{equation*}
 L(y) := \exp \left\{ \frac{(\log y)^{3/5}}{(\log\log y)^{1/5}}\right\}.
\end{equation*} 

\begin{theorem}\label{thm2}
There exists a positive constant $c$ such that
\begin{equation*}
A(t), B(t) = \exp \left\{-y +  \min_{s \in J} I(y,s) + R(y)\right\},
\end{equation*}
where $J=[y\log y - y, y\log y +y]$ and 
\begin{equation*}
R(y) = O\left(\frac{y}{L(y)^c}\right) .
\end{equation*}
Assuming the Riemann hypothesis we have
\begin{equation*}
R(y) = O\left(\sqrt{y} \, (\log y)^2 \right).
\end{equation*}
\end{theorem}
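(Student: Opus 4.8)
The plan is to run the argument behind Theorem~\ref{thm1} -- i.e.\ the one underlying Lemma~\ref{Wz}, Lemma~\ref{Wy} and Section~\ref{secthm} -- but to stop one step earlier, keeping the integral $I(y,s)$ rather than expanding it in powers of $t^{-1}$. I treat $B(t)$; for $A(t)$ one repeats everything with the contributions $\log\bigl(\sigma(p^a)/p^a\bigr)$ in place of $\log\frac{p}{p-1}$, and, as in the earlier work, the difference affects only terms smaller than $R(y)$, so the conclusion is the same.

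Since $n/\varphi(n)$ is the exponential of the additive function $\sum_{p\mid n}\log\frac p{p-1}$, the Erd\H{o}s--Wintner theorem gives the exact identity $B(t)=\mathbf{P}\bigl[\sum_{p}X_p\ge\log t\bigr]$, where the $X_p$ are independent with $X_p=\log\frac p{p-1}$ on an event of probability $1/p$ and $X_p=0$ otherwise; note that $\log t=\gamma+\log\log y$ exactly, by~\eqref{ydef}. Estimating this upper tail is the whole problem. Rankin's inequality gives $-\log B(t)\ge F(s):=s\log t-\Lambda(s)$ for every $s\ge 0$, where $\Lambda(s):=\sum_p\log\mathbf{E}\,e^{sX_p}$, while an Erd\H{o}s-type construction of integers with $n/\varphi(n)\ge t$, as in the earlier work, yields the matching bound $-\log B(t)=F(s^{*})+O(\log y)$, $s^{*}$ being the unique maximiser of $F$. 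A short computation with Mertens' theorem locates $s^{*}$: it solves $\Lambda'(s^{*})=\log t$, and since $\Lambda'(y\log y)=\log t+O((\log y)^{-2})$ while $\Lambda''\asymp y^{-1}(\log y)^{-2}$ on the relevant range, $s^{*}$ lies within $O(y)$ of $y\log y$ -- in fact, by a slightly more precise version of the same estimate, inside $J=[\,y\log y-y,\,y\log y+y\,]$, so that $F(s^{*})=\max_{s\in J}F(s)$.

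It remains to evaluate $F$ on $J$. From $\mathbf{E}\,e^{sX_p}=p^{-1}\bigl(p-1+(\tfrac p{p-1})^{s}\bigr)$ one gets, for $p\le y$ (where $(\tfrac p{p-1})^{s}$ dominates),
\[
\log\mathbf{E}\,e^{sX_p}=s\log\tfrac p{p-1}-\log p+\log\bigl(1+(p-1)(\tfrac{p-1}{p})^{s}\bigr),
\]
and, for $p>y$,
\[
\log\mathbf{E}\,e^{sX_p}=\log\bigl(1-\tfrac1p\bigr)+\log\bigl(1+\tfrac1{p-1}(\tfrac p{p-1})^{s}\bigr),
\]
the full tail from $p>y\log y$ contributing only $O(1/\log y)$. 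Summing over $p$: the term $s\log t$ cancels $s\sum_{p\le y}\log\frac p{p-1}$ up to $s\,\delta(y)$, with $\delta(y):=\log t-\sum_{p\le y}\log\frac p{p-1}$; one has $\sum_{p\le y}\log p=\theta(y)$; and the two families of $\log(1+\cdots)$ terms, after the harmless substitutions $\log\frac p{p-1}\mapsto 1/p$ and $p-1\mapsto p$ (harmless because both families live in the window $|p-y|\ll y/\log y$, on which $s\bigl(\log\frac p{p-1}-\frac1p\bigr)=O(1/\log y)$) and after replacing the prime sums $\sum_{p\le y}$ and $\sum_{y<p\le y\log y}$ by their integrals against $dx/\log x$, become exactly the two integrals defining $I(y,s)$. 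Hence $F(s)=\theta(y)-s\,\delta(y)-I(y,s)+E(y,s)$, where $E(y,s)$ collects the errors just listed. The sum-to-integral errors and the discrepancy $\theta(y)-y$ are controlled by the Prime Number Theorem; in its Vinogradov--Korobov form, with error $O(x/L(x)^{c})$, they are $O(y/L(y)^{c})$, and the same input gives $\delta(y)=O(1/L(y)^{c})$, hence $|s\,\delta(y)|=O(y/L(y)^{c})$ for $s\in J$, for a suitable $c>0$. Maximising over $s\in J$ and using $\max_{s\in J}\bigl(-s\,\delta(y)-I(y,s)\bigr)=-\min_{s\in J}I(y,s)+O(y/L(y)^{c})$ yields $-\log B(t)=y-\min_{s\in J}I(y,s)+O(y/L(y)^{c})$, the unconditional claim. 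Under the Riemann Hypothesis every such Prime Number Theorem error improves to $O(\sqrt{x}\,(\log x)^{2})$, and since the prime sums here run up to $x=y\log y$ the dominant term is $\theta(y)-y$, giving $R(y)=O(\sqrt{y}\,(\log y)^{2})$.

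I expect the main difficulty to be the error bookkeeping in the last step. Each link of the chain -- truncation at $y\log y$, $\log\frac p{p-1}\mapsto 1/p$, $p-1\mapsto p$, the two sum-to-integral passages, $\theta(y)\mapsto y$, and the control of $s\,\delta(y)$ -- must be shown to cost no more than $O(y/L(y)^{c})$, and this is essentially sharp, because $I(y,s)$ itself is only of size $\asymp y/(\log y)^{2}$: indeed $\min_{s}I(y,s)\sim\frac{\pi^{2}}{6}\,y/(\log y)^{2}$, with $\frac{\pi^{2}}{6}=2\int_{0}^{\infty}\log(1+e^{-v})\,dv$, which is what produces the coefficient $a_{2}=-\frac{\pi^{2}}{6}e^{2\gamma}$ of Theorem~\ref{thm1}. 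So the concentration of the integrands near $x=y$ has to be exploited in every single estimate, and, since the first integral in $I$, the second integral in $I$, and any leftover are all of this one order, they must be kept cleanly separated. A secondary point, needed for ``$\max$ over $J$'' to capture $-\log B(t)$ exactly, is the Mertens-theoretic verification that $s^{*}$ really lies inside this particular $J$ and not merely within $O(y)$ of $y\log y$.
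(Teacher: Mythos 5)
Your upper bound (the Rankin/Chernoff step $B(t)\le W(s)t^{-s}$) and your evaluation of $s\log t-\log W(s)$ for $s\in J$ are in substance the paper's Lemma \ref{BW}(i) and Lemmas \ref{WF}--\ref{We}, and that part of the sketch is sound apart from bookkeeping. The genuine gap is the matching lower bound. You assert that ``an Erd\H{o}s-type construction of integers with $n/\varphi(n)\ge t$, as in the earlier work, yields $-\log B(t)=F(s^{*})+O(\log y)$,'' but the earlier work only gives $B(t)=\exp\{-y(1+O(t^{-2}))\}$, i.e.\ a lower bound whose error in $-\log B(t)$ is of order $y/(\log y)^{2}$ --- exactly the size of $\min_{s\in J}I(y,s)$ itself (as you note, $\min_{s}I(y,s)\sim\frac{\pi^{2}}{6}\,y/(\log y)^{2}$). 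So the cited construction cannot even isolate the term $\min_{s\in J}I(y,s)$, let alone reach the error $O(y/L(y)^{c})$ or the conditional $O(\sqrt{y}(\log y)^{2})$; a lower bound of precision $\exp\{O(\log y)\}$ (or at any rate $\exp\{O(y/L(y)^{c})\}$) is precisely what has to be proved, and your proposal contains no argument for it (a local CLT or Berry--Esseen estimate for the exponentially tilted measure would be one route, but you would have to carry it out). The paper gets this bound from the Mellin identity \eqref{Mellin}: writing $W(s)/s=\int_{0}^{\infty}B(x)x^{s-1}\,dx$ and taking $t$ at which $B(x)x^{s-1}$ is maximal, a dyadic tail estimate gives $B(t)\ge W(s)/(3st^{s})$ (Lemma \ref{BW}(ii), with Lemma \ref{sylogy} locating $s$); arbitrary $t$ are then reached through Lemma \ref{g2} ($g''(s)\asymp 1/(s\log s)$), Lemma \ref{gap} ($|t-t_{1}|\ll\sqrt{t/y}$ for some maximizer $t_{1}$) and the monotonicity of $B$ --- a step your sketch also omits, since nothing guarantees that a tilted lower bound, even once proved, applies at the given $t$ rather than only at special values of $t$.

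Two secondary points. Locating $s^{*}$ inside $J$ (not merely within $O(y)$ of $y\log y$) is handled in the paper by re-running the Lemma \ref{sylogy} argument with Theorem \ref{thm1} ($m=2$) in place of \eqref{old}; your ``slightly more precise version of the same estimate'' would need this or an equivalent input, because $\Lambda'(y\log y)-\log t=O((\log y)^{-2})$ together with $\Lambda''\asymp y^{-1}(\log y)^{-2}$ only gives $|s^{*}-y\log y|=O(y)$ with an unspecified constant. And for $A(t)$ the paper does not redo the analysis: it transfers the result from $B$ to $A$ via Theorem \ref{AB}; your plan to repeat the probabilistic argument with $\sigma(p^{a})/p^{a}$ would again require the missing lower-bound machinery in that setting.
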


The behavior of $B(t)$ near $t=1$ is described by Tenenbaum and Toulmonde \cite[Thm. 1.2]{TenToul}, who show that
\begin{equation}\label{TT}
1-B(1+1/(\sigma -1)) = \sum_{j=1}^m \frac{g_j}{(\log \sigma)^j}+ O\left(\frac{|g_{m+1}|}{(\log \sigma)^{m+1}} + \frac{1}{L(\sigma)^c}\right),
\end{equation}
for some $c>0$, where
\begin{equation*}
g_1 = e^{-\gamma}, \quad g_2= 0, \quad g_3=-\frac{1}{12}\, \pi^2 \, e^{-\gamma},
\end{equation*}
and
\begin{equation*}
g_j = \{1+O(j^{-1})\} \, e^{-\gamma} (-1)^{j+1} (j-3)! \qquad (j\ge 3) .
\end{equation*}

A classic result (see e.g. \cite{Schoenberg}) states that for all $s\in \mathbb C$ we have
\begin{equation}\label{W}
    W(s) := \lim_{N\rightarrow \infty} \frac{1}{N}
    \sum_{n\le N} \left(\frac{n}{\varphi(n)}\right)^s
    =\prod_{p}\left(1 + \frac{(1-p^{-1})^{-s}-1}{p }\right)
\end{equation}
and thus
\begin{equation}\label{Mellin}
  \int_0^\infty B(x) \, x^{s-1} dx = 0  -\frac{1}{s}\int_0^\infty x^s \, dB(x) = \frac{W(s)}{s}, \quad (\Re(s)>0).
\end{equation}
Hence $\frac{W(s)}{s}$ is the Mellin transform of $B(t)$. The method used in \cite{TenToul} to establish \eqref{TT} is essentially that of inversion of the Mellin transform with the abscissa of integration moved to $-\sigma$. 
For large $t$ on the other hand, we find that $W(s)\, t^{-s}$ is small when $\Re(s)$ is close to $y\log y$.
It turns out that the minimum of $W(s)\, t^{-s}$ with respect to $s$ along the positive real axis is already an excellent estimate for $B(t)$ (see Lemma \ref{BW}), and it appears that inversion is not a natural choice in this case because of the slower convergence of the product in \eqref{W} when $\Re(s)>0$. Therefore we will restrict our investigation to $s \in [0, \infty)$.

The following result shows that $A(t)$ and $B(t)$ are close enough so that it suffices to show that Theorems \ref{thm1} and \ref{thm2} hold for $B(t)$, which is the simpler
object since $\varphi(n)$ does not depend on the multiplicities of the prime factors of $n$.
\begin{theorem}\label{AB}
For $t\ge t_0$ we have
\begin{equation*}
 A(t) \le B(t) <  e^{ 3\sqrt{y}} \, A\left(  t - \frac{5 e^\gamma}{\sqrt{y}}\right)
\end{equation*}
\end{theorem}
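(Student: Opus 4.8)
The inequality $A(t)\le B(t)$ is immediate, since for every $n$
\[
\frac{\sigma(n)}{n}=\prod_{p^a\| n}\Bigl(1+\frac1p+\dots+\frac1{p^a}\Bigr)\ \le\ \prod_{p\mid n}\Bigl(1-\frac1p\Bigr)^{-1}=\frac{n}{\varphi(n)},
\]
so that $\{n:\sigma(n)/n\ge t\}\subseteq\{n:n/\varphi(n)\ge t\}$. For the remaining inequality the plan is to bound both $B(t)$ and $A(t-\delta)$, with $\delta:=5e^\gamma/\sqrt y$, in terms of one auxiliary density. I would fix a constant $c$ with $\tfrac12<c<1$, put $w:=\sqrt y$, $Q:=\prod_{p\le w}p$, and for $p\le w$ set $a_p:=\lceil c\log y/\log p\rceil$, so that $p^{a_p}\ge y^{c}$. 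Writing $n=n_0n_1$ for the factorization of $n$ into its $w$-smooth part $n_0$ and $w$-rough part $n_1$, let $G(x):=\mathrm{dens}\{n:n_1/\varphi(n_1)\ge x\}$ be the distribution function of the multiplicative function $n\mapsto n_1/\varphi(n_1)$, which exists and is non-increasing. Two standard facts about natural density are used throughout: for a condition on $(v_p(n))_{p\le w}$ and a condition on $n_1$ the corresponding densities multiply, and $\mathrm{dens}\{n:\mathrm{rad}(n_0)=r_0\}=\varphi(Q)/(Q\varphi(r_0))$ for squarefree $r_0\mid Q$.

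To bound $B(t)$ from above one partitions according to $r_0:=\mathrm{rad}(n_0)$; using $n_0/\varphi(n_0)=r_0/\varphi(r_0)$, the monotonicity of $G$ together with $\varphi(r_0)/r_0\ge\varphi(Q)/Q$, and the identity $\sum_{r_0\mid Q}1/\varphi(r_0)=\prod_{p\le w}p/(p-1)=Q/\varphi(Q)$, this gives
\[
B(t)=\frac{\varphi(Q)}{Q}\sum_{r_0\mid Q}\frac{1}{\varphi(r_0)}\,G\!\Bigl(\frac{t\,\varphi(r_0)}{r_0}\Bigr)\ \le\ G\!\Bigl(\frac{t\,\varphi(Q)}{Q}\Bigr).
\]
To bound $A(t-\delta)$ from below one restricts to integers $m$ with $v_p(m)\ge a_p$ for all $p\le w$. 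For such $m$ one has $m_0/\varphi(m_0)=Q/\varphi(Q)$, and the exact identity $\sigma(m)/m=(m/\varphi(m))\prod_{p^a\| m}(1-p^{-a-1})$ yields
\[
\frac{\sigma(m)}{m}\ \ge\ \frac{Q}{\varphi(Q)}\,(1-\eta_0)(1-\eta_1)\,\frac{m_1}{\varphi(m_1)},
\]
with $\eta_0:=1-\prod_{p\le w}(1-p^{-a_p-1})\le\sum_{p\le w}p^{-a_p-1}\ll y^{-c}\log\log y$ and $\eta_1:=1-\prod_{p>w}(1-p^{-2})\le\sum_{p>w}p^{-2}$. Hence $\sigma(m)/m\ge t-\delta$ whenever $m_1/\varphi(m_1)\ge x^*:=(t-\delta)\varphi(Q)\big/\bigl(Q(1-\eta_0)(1-\eta_1)\bigr)$, so that by independence
\[
A(t-\delta)\ \ge\ \Bigl(\prod_{p\le w}p^{-a_p}\Bigr)G(x^*)\ =:\ C_w\,G(x^*).
\]

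It then remains to check, for $t\ge t_0$, that $x^*\le t\varphi(Q)/Q$ and $C_w^{-1}\le e^{3\sqrt y}$; together with the two displays this gives $B(t)\le G(t\varphi(Q)/Q)\le G(x^*)\le C_w^{-1}A(t-\delta)<e^{3\sqrt y}A(t-\delta)$. The first is equivalent to $\delta\ge t(\eta_0+\eta_1-\eta_0\eta_1)$, and since $t=e^\gamma\log y$ exactly, the prime number theorem gives $\eta_1\le\sum_{p>\sqrt y}p^{-2}\sim 2/(\sqrt y\log y)$ while $c>\tfrac12$ gives $t\eta_0\ll e^\gamma\log y\cdot y^{-c}\log\log y=o(1/\sqrt y)$, so $t(\eta_0+\eta_1)\le(2+o(1))e^\gamma/\sqrt y<\delta$. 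The second follows from $\pi(\sqrt y)\sim 2\sqrt y/\log y$ and $\theta(\sqrt y)\sim\sqrt y$:
\[
\log\frac1{C_w}=\sum_{p\le\sqrt y}a_p\log p\ \le\ c\log y\cdot\pi(\sqrt y)+\theta(\sqrt y)=(2c+1+o(1))\sqrt y\ <\ 3\sqrt y,
\]
the last inequality holding for large $t$ because $c<1$.

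The crux is the simultaneous calibration of $w$ and the exponents $a_p$: on the one hand $\sum_{p>w}p^{-2}$ must fit inside the budget $\delta/t\asymp 1/(\sqrt y\log y)$, which forces $w$ to be essentially $\sqrt y$; on the other hand each $p^{a_p}$ must exceed roughly $y$ so that $\sum_{p\le w}p^{-a_p-1}$ is negligible, while the total $\sum_{p\le w}a_p\log p$ must stay below $3\sqrt y$. These demands coexist only because $\pi(\sqrt y)\log y\sim 2\sqrt y$ and $\theta(\sqrt y)\sim\sqrt y$, which is exactly why $a_p$ is taken close to $\log y/\log p$ with leading constant strictly below $1$ and why the prime number theorem enters; I expect this to be the main obstacle. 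The remaining ingredient—multiplicativity of the natural density across the $w$-smooth and $w$-rough variables—is standard (it is implicit in the Euler-product form of the distribution functions involved) and should be quoted rather than reproved.
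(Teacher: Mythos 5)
Your proposal is correct, but it takes a genuinely different route from the paper. The paper's argument is a one-step injection: it fixes the single integer $m=\prod_{p\le\sqrt y}p^{\lfloor\log y/\log p\rfloor}$ and observes that $n\mapsto nm$ maps $\{n\le N:n/\varphi(n)\ge t\}$ injectively into $\{n'\le Nm:\sigma(n')/n'\ge t-5e^\gamma/\sqrt y\}$, because $\sigma(nm)/(nm)=(nm/\varphi(nm))\prod_{p^k\|nm}(1-p^{-k-1})\ge t\,(1-5/(\sqrt y\log y))$; this gives $B(t)\le m\,A(t-5e^\gamma/\sqrt y)$ at once, and the only analytic inputs are the two prime-number-theorem estimates $\prod_{p^k\|nm}(1-p^{-k-1})\ge 1-5/(\sqrt y\log y)$ and $\log m\le\pi(\sqrt y)\log y<3\sqrt y$ — exactly the calibration you identified as the crux, with the same choices $w=\sqrt y$ and exponents $\asymp\log y/\log p$. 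You instead factor the natural density across the $\sqrt y$-smooth and $\sqrt y$-rough parts, introduce the auxiliary distribution $G$ of $n_1/\varphi(n_1)$, and prove the two bounds $B(t)\le G(t\varphi(Q)/Q)$ and $A(t-\delta)\ge C_w\,G(x^*)$; the arithmetic of your calibration ($t\eta_1\sim 2e^\gamma/\sqrt y<\delta$, $t\eta_0=o(y^{-1/2})$ for $c>1/2$, $\log C_w^{-1}\sim(2c+1)\sqrt y<3\sqrt y$ for $c<1$) is sound. What your route costs is the extra foundational layer you propose to quote: the existence (and use at specific points) of the limit distribution $G$, and the exact multiplicativity of densities for joint events on $(n_0,n_1)$; these are true and routine (the factorization follows from the convergence of $\sum 1/s$ over $\sqrt y$-smooth $s$ together with Davenport--Schoenberg applied to $n\mapsto n_1/\varphi(n_1)$), but they are not quite off-the-shelf citations and would need a short justification, whereas the paper's injection argument needs nothing beyond the existence of $A$ and $B$ themselves. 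What your route buys is some extra structure: the clean intermediate inequality $B(t)\le G(t\varphi(Q)/Q)$ and an explicit display of where the loss factor $C_w^{-1}$ and the shift $\delta$ come from. One trivial loose end in both arguments: the stated inequality is strict, which follows since $A(t-\delta)>0$ (every multiple of a fixed integer $n_0$ with $\sigma(n_0)/n_0\ge t$ is counted) and $C_w^{-1}<e^{3\sqrt y}$ strictly for $t\ge t_0$.
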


Another arithmetic function closely related to $\varphi$ and $\sigma$ is Dedekind's $\psi$ function, defined by
\begin{equation*}
\psi(n) =n\prod_{p|n} (1+p^{-1}).
\end{equation*}
With
\begin{equation*}
D(t):= \lim_{N\rightarrow \infty}
\frac{1}{N} \left|\left\{n\le N: \psi(n)/n \ge t \right\}\right|,
\end{equation*}
one can show that $D(t/\zeta(2))$ also satisfies Theorems \ref{thm1} and \ref{thm2}.
It is easy to see that $D(t/\zeta(2)) \ge B(t)$ using the definition of $\psi$ and $\varphi$.
For the upper bound of $D(t/\zeta(2))$ one can consider the analog of Lemma \ref{BW} (i) below.

\section{Proof of Theorem \ref{AB}}

The inequality $A(t)\le B(t)$, valid for all $t$, follows from
\begin{equation*}
\frac{\sigma(n)}{n}
=\prod_{p^\nu || n}\frac{1+p+\ldots + p^{\nu}}{p^{\nu}}
=\prod_{p^\nu || n} \frac{1-p^{-\nu-1}}{1-p^{-1}}
< \prod_{p|n} \frac{1}{1-p^{-1}}
=\frac{n}{\varphi(n)}.
\end{equation*}
To establish the second inequality of Theorem \ref{AB} we let 
\begin{equation*}
m=m(t)=\prod_{p\le \sqrt{y}} p^{h_p}, \quad \mbox{where} \ h_p =  \left\lfloor \frac{\log y}{\log p} \right\rfloor.
\end{equation*}
For every $n$ that satisfies 
\begin{equation*}
\frac{n}{\varphi(n)} = \prod_{p|n}\frac{1}{1-p^{-1}} \ge t,
\end{equation*}
$n m$ will satisfy
\begin{equation*}
\frac{\sigma(n m)}{n m} = \prod_{p^k || n m} \frac{1-p^{-k-1}}{1-p^{-1}} 
= \prod_{p || n m} \frac{1}{1-p^{-1}} \prod_{p^k || n m} (1-p^{-k-1})
\ge t P,
\end{equation*}
where
\begin{equation*}
P=\prod_{p^k || n m} (1-p^{-k-1}) 
\ge \prod_{p\le \sqrt{y}} \left(1-\frac{1}{y}\right) \prod_{p > \sqrt{y}} \left(1-\frac{1}{p^2}\right)
\ge 1 -\frac{5}{\sqrt{y}\log y},
\end{equation*}
for $t\ge t_0$, by a standard application of the prime number theorem. Thus
\begin{equation*}
\frac{\sigma (n m)}{n m} \ge t \left(1-\frac{5}{\sqrt{y}\log y}\right)= t - \frac{5 e^\gamma}{\sqrt{y}},
\end{equation*}
which implies
\begin{equation*}
A\left( t - \frac{5 e^\gamma}{\sqrt{y}}\right) \ge \frac{1}{m} \, B(t).
\end{equation*}
The result now follows since, for $t\ge t_0$, 
\begin{equation*}
\log m = \sum_{p\le \sqrt{y}} h_p \log p \le \sum_{p\le \sqrt{y}} \log y < 3\sqrt{y}.
\end{equation*}

\section{The relation between $B(t)$ and $W(s)$.}

\begin{lemma}\label{sylogy}
Let $s\ge 1$. If 
$$
 B(t) t^{s-1} = \max_{x\ge 0}  B(x) x^{s-1}
$$
then
$$
s = y \log y +O(y).
$$
\end{lemma}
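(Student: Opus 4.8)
The plan is to make rigorous the heuristic that the maximiser $t$ of $G(x):=\log B(x)+(s-1)\log x$ obeys the first-order condition $(s-1)/x=-\frac{d}{dx}\log B(x)\approx e^{-\gamma}y(x)$, since \eqref{old} gives $\log B(x)\approx -y(x)$ and $y'(x)=e^{-\gamma}y(x)$; at $x=t$ this reads $s-1\approx e^{-\gamma}t\,y(t)=y\log y$, using $\log y=te^{-\gamma}$. Because $B$ need not be differentiable, I will use only the maximality inequality $G(t)\ge G(x)$, evaluated at the two comparison points $x=t\pm 1/t$. The preliminary input from \eqref{old} is that there are constants $C,x_1$ with $|\log B(x)+y(x)|\le Cy(x)/x^2$ for all $x\ge x_1$; in particular $B(x)>0$ for such $x$. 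I would first dispose of the range $t\le t_0$, where $t_0>x_1+1$ is a fixed constant: comparing the maximum against $B(x_2)x_2^{s-1}$ at a fixed point $x_2>t_0$ and using $B(t)\le 1$ and $B(x_2)>0$ gives $s=O(1)$, while $y\log y=O(1)$ as well (and $y\ge 1$ always), so the conclusion holds trivially. Henceforth $t\ge t_0$, so that $B$ is positive near $t$ and \eqref{old} applies at $t$ and at $t\pm 1/t$.

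For the upper bound, take $x=t+1/t$, rearrange $G(t)\ge G(x)$, and divide by $\log x-\log t=\log(1+1/t^2)=t^{-2}(1+O(t^{-2}))>0$ to get
\[
 s-1\le\frac{y(t+1/t)-y(t)+O\!\left(y(t)/t^{2}\right)}{\log(1+1/t^{2})},
\]
the $O$-term absorbing $|\log B(x)+y(x)|$ at $x=t$ and at $x=t+1/t$; here one uses $y(t+1/t)=y(t)(1+O(1/t))$ and $(t+1/t)^2=t^2(1+O(t^{-2}))$ to see that the error from \eqref{old} at the shifted point is still $O(y(t)/t^2)$. Since $y(t+1/t)-y(t)=y(t)\bigl(e^{e^{-\gamma}/t}-1\bigr)=e^{-\gamma}y(t)/t+O(y(t)/t^2)$, the right-hand side equals $e^{-\gamma}y(t)\,t+O(y(t))=y\log y+O(y)$. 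The lower bound is the mirror image with $x=t-1/t$: now $\log t-\log x=-\log(1-1/t^2)=t^{-2}(1+O(t^{-2}))>0$, the analogous expansions give $s-1\ge e^{-\gamma}y(t)\,t-O(y(t))$, and combining the two estimates yields $s=y\log y+O(y)$.

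I expect no deep obstacle; the point needing care is the scale of the comparison shift. A shift of order $1$ would replace the leading factor $e^{-\gamma}$ by $e^{\pm e^{-\gamma}}-1\ne e^{-\gamma}$, giving only $s\asymp y\log y$ rather than the correct coefficient, whereas a shift much smaller than $1/t$ would let the $O(y/t^2)$ term overwhelm the main term $e^{-\gamma}y/t$ in the numerator. Thus $|x-t|\asymp 1/t$ is forced, and once that choice is in place the rest is routine Taylor expansion, together with the verification that the error terms from \eqref{old} survive the shift as claimed.
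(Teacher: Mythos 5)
Your proposal is correct and follows essentially the same route as the paper: both compare the maximality inequality at the points $t\pm 1/t$, apply \eqref{old} at $t$ and at the shifted points, and Taylor-expand $y(t\pm 1/t)-y(t)$ and $\log(1\pm t^{-2})$ to extract the two-sided bound $s-1=e^{-\gamma}ty+O(y)=y\log y+O(y)$. Your extra handling of the range $t\le t_0$ is a harmless refinement the paper leaves implicit.
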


\begin{proof}
Assume $B(t) t^{s-1}\ge B(t+h) (t+h)^{s-1}$ for $|h|\le 1$. After taking logarithms we use \eqref{old} to obtain
\begin{equation*}
y\, (e^{h\,e^{-\gamma}} - 1) + O(y\,t^{-2}) \ge (s-1) \log (1+h\, t^{-1}) ,
\end{equation*}
and hence
\begin{equation*}
y \, h\, e^{-\gamma}  \ge (s-1)h\, t^{-1} + O(s\, h^2\,t^{-2} + y h^2 + y\,t^{-2}) .
\end{equation*}
The result now follows if we first let $h=t^{-1}$, and then $h=-t^{-1}$, and multiply the last inequality by $h^{-1} t$ in each case. 
\end{proof}

\begin{lemma}\label{BW}
\begin{enumerate}
\item[(i)]For all $s\ge 0$, $t > 0$ we have
\begin{equation*}
B(t) \le  \frac{W(s)}{t^s}.
\end{equation*}
\item[(ii)] Let $s\ge 1$ and $t\ge t_0$. If 
$
B(t)t^{s-1} = \max_{x\ge 0} B(x)x^{s-1},
$
then
\begin{equation*}
 \frac{W(s)}{3 s \, t^s} \le B(t)
\end{equation*}
and 
\begin{equation*}
\log B(t) =  O(t) + \min_{u\ge 0} \log \frac{W(u)}{t^u}.
\end{equation*}
\end{enumerate}
\end{lemma}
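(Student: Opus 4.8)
The plan is to prove part (i) by exploiting the Mellin relation \eqref{Mellin} together with the monotonicity of $B$. Since $B$ is nonincreasing, for any $s\ge 0$ and $t>0$ we have
\begin{equation*}
B(t)\, \frac{t^s}{s} = B(t)\int_0^t x^{s-1}\,dx \le \int_0^t B(x)\, x^{s-1}\,dx \le \int_0^\infty B(x)\, x^{s-1}\,dx = \frac{W(s)}{s},
\end{equation*}
where the last equality is \eqref{Mellin} (valid for $\Re(s)>0$; the case $s=0$ is the trivial bound $B(t)\le 1 = W(0)$). Cancelling $s$ gives $B(t)\le W(s)/t^s$, which is (i). The only subtlety is the interchange at $s=0$, which is handled separately.

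For part (ii), assume $t\ge t_0$ and that $B(t)t^{s-1}$ is maximal over $x\ge 0$; by Lemma \ref{sylogy} we then know $s = y\log y + O(y)$, so in particular $s\ge 1$ and $s\to\infty$ with $t$. The maximality hypothesis says $B(x)x^{s-1}\le B(t)t^{s-1}$ for all $x\ge 0$, hence
\begin{equation*}
\frac{W(s)}{s} = \int_0^\infty B(x)\,x^{s-1}\,dx \le B(t)\,t^{s-1} \int_0^\infty \Bigl(\frac{x}{t}\Bigr)^{s-1}\frac{B(x)}{B(t)}\,\mathbf{1}_{[0,t]}(x)\,\frac{dx}{?}
\end{equation*}
— more cleanly: split the integral at $x=t$. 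On $[0,t]$ use $B(x)\le 1$ and bound $\int_0^t x^{s-1}dx = t^s/s$; on $[t,\infty)$ use the maximality bound $B(x)x^{s-1}\le B(t)t^{s-1}$ together with the rapid decay of $B$ from \eqref{old} to see that $\int_t^\infty B(x)x^{s-1}dx$ is negligible compared with $B(t)t^s$. Adding the pieces yields $W(s)/s \le B(t)t^{s-1}\cdot(t/(s-1)) \cdot C + (\text{small}) \ll s\, t^{s-1}\cdot t \cdot B(t)$ after absorbing constants, and one checks the constant can be taken as $3$ for $t\ge t_0$; this gives $W(s)/(3st^s)\le B(t)$.

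The second display in (ii) then follows by combining the two bounds: from (i), $\log B(t)\le \min_{u\ge 0}\log(W(u)/t^u) \le \log(W(s)/t^s)$, and from the lower bound just proved, $\log B(t)\ge \log(W(s)/t^s) - \log(3s)$. Since $s = y\log y + O(y)$ gives $\log(3s) = O(\log y) = O(t)$, and since $\min_{u\ge 0}\log(W(u)/t^u)$ lies between $\log(W(s)/t^s) - O(t)$ and $\log(W(s)/t^s)$ (the latter because $u=s$ is one admissible choice, the former because any minimizer $u^*$ satisfies $\log B(t) \le \log(W(u^*)/t^{u^*})$ while the lower bound forces $\log(W(u^*)/t^{u^*}) \ge \log B(t) \ge \log(W(s)/t^s) - O(t)$... ), we get $\log B(t) = \min_{u\ge 0}\log(W(u)/t^u) + O(t)$.

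\textbf{Main obstacle.} The delicate point is the lower bound in (ii): one must show that the mass of $\int_0^\infty B(x)x^{s-1}dx$ is genuinely concentrated near $x=t$, so that the tail $\int_t^\infty$ does not dominate. This requires using the quantitative decay \eqref{old} of $B$ — that $B(x)$ falls off like $\exp(-e^{xe^{-\gamma}})$ — to beat the polynomial growth $x^{s-1}$ with $s$ of size $y\log y$; the crossover happens precisely because $\log B(x) \approx -e^{xe^{-\gamma}}$ and $(s-1)\log x \approx y\log y \cdot \log x$ balance near $x=t$. Making the constant explicit enough to reach the clean factor $3$ (rather than some unspecified $C$) is the one place where $t_0$ must be taken large and a short numerical estimate is needed; everything else is the soft Mellin/monotonicity argument above.
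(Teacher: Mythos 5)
Your part (i) is correct, though it takes a different route from the paper: you deduce it from the Mellin identity \eqref{Mellin} together with the monotonicity of $B$, via $B(t)\,t^s/s\le\int_0^t B(x)x^{s-1}\,dx\le W(s)/s$, whereas the paper proves it in one line as a Rankin-type bound, $B(t)\le\lim_{N}\frac1N\sum_{n\le N}\bigl(\tfrac{n}{t\varphi(n)}\bigr)^s=W(s)/t^s$, which needs neither \eqref{Mellin} nor monotonicity. Both arguments are valid.

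In part (ii) there is a genuine gap, and it is on the interval $[0,t]$, not in the tail you flag as the main obstacle. Bounding $\int_0^t B(x)x^{s-1}\,dx$ by $B(x)\le 1$ gives $t^s/s$, which must be compared with the target $B(t)t^s$: since $B(t)=\exp\{-y(1+o(1))\}$ while $1/s\asymp 1/(y\log y)$, your bound exceeds the target by a factor of roughly $e^{y}/s$, so with it no lower bound of the form $W(s)/(3st^s)\le B(t)$ (nor any bound $W(s)\ll s^{O(1)}t^sB(t)$) can follow. The maximality hypothesis is precisely the tool needed on $[0,t]$, where $B(x)$ is of order $1$ and only the hypothesis $B(x)x^{s-1}\le B(t)t^{s-1}$ makes the integrand small: it gives $\int_0^t\le t\,B(t)t^{s-1}=B(t)t^s$ at once. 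This is what the paper does: it splits $\int_0^\infty=I_1+I_2+I_3$ at $t$ and $2t$, bounds $I_1,I_2\le B(t)t^s$ by maximality, and controls $I_3$ by a doubling argument, showing from \eqref{old} and Lemma \ref{sylogy} (so $s=y\log y+O(y)$) that $B(2x)\le B(x)\,2^{-(s+1)}$ for $x\ge t$, whence $\int_{t2^k}^{t2^{k+1}}B(x)x^{s-1}dx\le\frac12\int_{t2^{k-1}}^{t2^k}B(x)x^{s-1}dx$ and $I_3\le I_2$; the constant $3$ then comes for free from the three pieces, with no numerical estimate or special tuning of $t_0$. Your concluding line ``$W(s)/s\le B(t)t^{s-1}\cdot(t/(s-1))\cdot C+\text{small}\ll s\,t^{s-1}\,t\,B(t)$'' does not follow from the bounds you actually stated (the $[0,t]$ estimate you chose produces $t^s/s$, not $B(t)t^s$), and the extra factor $s$ on the right appears out of nowhere. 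By contrast, your deduction of the final display of (ii) from (i) and the lower bound, using $\log(3s)=O(t)$, is correct and is the same as the paper's, once the lower bound is actually established.
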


\begin{proof}
(i) For all $s\ge 0$,
\begin{equation*}
    B(t) = \lim_{N \rightarrow \infty} \frac{1}{N}
    \sum_{\stackrel{n\le N}{n\ge t\,\varphi(n)}} 1 \le \lim_{N \rightarrow \infty} \frac{1}{N}
    \sum_{n\le N} \left(\frac{n}{t\, \varphi(n)}\right)^s
   =\frac{W(s)}{t^s}.
\end{equation*}

(ii) From \eqref{Mellin} we have
\begin{equation*}
\frac{W(s)}{s} = \int_0^{t} B(x) \, x^{s-1} dx + \int_t^{2t} B(x) \, x^{s-1} dx + \int_{2t}^\infty B(x) \, x^{s-1} dx =: I_1+I_2+I_3
\end{equation*}
Since $\max_{x\ge 0} (B(x) \, x^{s-1}) = B(t) \, t^{s-1}$, we have $I_1, I_2 \le t \, B(t) \, t^{s-1} =B(t) \, t^{s}$.
If $c$ is the implied constant in the error term of \eqref{old}, then for $x \ge t$
\begin{equation*}
\begin{split}
B(2x) &\le \exp\left\{ -e^{2x e^{-\gamma}} (1-c x^{-2})\right\}
 \le \exp\left\{ -e^{x e^{-\gamma}} (y/2)(1+c x^{-2})\right\} \\
& \le B(x)^{y/2} \le B(x)^{1+\log y} \le B(x) B(t)^{\log y} \\
& \le B(x) \exp\left\{ -y\log y +O(y/\log y)\right\} = B(x) \exp\left\{ -s+O(y)\right\} \\
& \le \frac{B(x)}{2^{s+1}},
\end{split}
\end{equation*}
since $s = y \log y +O(y)$ by Lemma \ref{sylogy}. 
We conclude that for $k\ge 1$ 
\begin{equation*}
 \int_{t2^k }^{t2^{k+1}} B(x) \, x^{s-1} dx 
 =2^s \int_{t2^{k-1} }^{t2^{k}} B(2x) \, x^{s-1} dx 
 \le \frac{1}{2} \int_{t2^{k-1}}^{t2^k} B(x) \, x^{s-1} dx,
\end{equation*}
and thus $I_3 \le I_2 \le B(t) \, t^{s}$.

The second assertion in (ii) follows from the first and (i), since $s = y \log y +O(y)$.
\end{proof}

\section{The study of the product $W(s)$.}

Let
\begin{equation*}\label{tuPu}
t_u := \prod_{p\le u} \frac{1}{1-p^{-1}}, \quad P_u := \prod_{p\le u} p .
\end{equation*}

\begin{lemma}\label{WF}
Let $2 \le u \le v$. For $s\ll v$ we have
 \begin{equation*}
\frac{W(s)}{t_u^s}= 
\frac{t_u}{t_v \, P_u} \left(1+O\left(\frac{s}{v\log v} \right)\right) 
\prod_{p\le u}\left(1+p\left(1-p^{-1}\right)^{s+1}\right) 
\prod_{u < p\le v} \left(1 + p^{-1}(1-p^{-1})^{-s-1}\right)
\end{equation*}
\end{lemma}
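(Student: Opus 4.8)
The plan is to rewrite the Euler product \eqref{W} for $W(s)$ in a form adapted to the threshold $u$, and then to control the contribution of the primes $p>v$. The algebraic heart of the matter is the identity, valid for every prime $p$,
\begin{align*}
1 + \frac{(1-p^{-1})^{-s}-1}{p} &= (1-p^{-1})\left(1 + \frac{(1-p^{-1})^{-s-1}}{p}\right) \\
&= \frac{(1-p^{-1})^{-s}}{p}\left(1 + p\,(1-p^{-1})^{s+1}\right),
\end{align*}
both sides being equal to $(1-p^{-1}) + p^{-1}(1-p^{-1})^{-s}$; the first factorization isolates the ``$p$ large'' piece $1-p^{-1}$, the second the ``$p$ small'' piece $p^{-1}(1-p^{-1})^{-s}$. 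I would use the second form for $p\le u$ and the first for $p>u$. Since $\prod_{p\le u}(1-p^{-1})^{-s}=t_u^s$, $\prod_{p\le u}p^{-1}=P_u^{-1}$ and $\prod_{u<p\le v}(1-p^{-1})=t_u/t_v$, this turns \eqref{W} into
\begin{equation*}
\frac{W(s)}{t_u^s} = \frac{t_u}{t_v\,P_u}\prod_{p\le u}\left(1 + p\,(1-p^{-1})^{s+1}\right)\prod_{u<p\le v}\left(1 + p^{-1}(1-p^{-1})^{-s-1}\right)\,E(s,v),
\end{equation*}
where $E(s,v):=\prod_{p>v}(1-p^{-1})\bigl(1+p^{-1}(1-p^{-1})^{-s-1}\bigr)$. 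Since the factor $E(s,v)$ is a scalar, the lemma follows once we show $E(s,v)=1+O\bigl(s/(v\log v)\bigr)$.

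To estimate $E(s,v)$ I would take logarithms and write $f_p(s):=\log(1-p^{-1})+\log\bigl(1+p^{-1}(1-p^{-1})^{-s-1}\bigr)$, so that $\log E(s,v)=\sum_{p>v}f_p(s)$ (the sum converges absolutely, as the bounds below show). The key observation is that $f_p(0)=0$, because $1+p^{-1}(1-p^{-1})^{-1}=(1-p^{-1})^{-1}$; hence $f_p(s)=\int_0^s f_p'(\sigma)\,d\sigma$, and a direct computation gives
\begin{equation*}
f_p'(s) = \frac{-\log(1-p^{-1})}{1+p\,(1-p^{-1})^{s+1}}.
\end{equation*}
Here $-\log(1-p^{-1})\ll 1/p$, while for $p>v$ and $0\le s\ll v$ one has $(1+s)\,|\log(1-p^{-1})|\ll (1+s)/p\ll 1$, so that $(1-p^{-1})^{s+1}\gg 1$ and the denominator $1+p(1-p^{-1})^{s+1}$ is $\gg p$. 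Thus $f_p'(s)\ll p^{-2}$ uniformly in $s$, hence $f_p(s)\ll s/p^2$, and
\begin{equation*}
\log E(s,v) \ll s\sum_{p>v}\frac{1}{p^2} \ll \frac{s}{v\log v}
\end{equation*}
by partial summation and Chebyshev's bound $\pi(t)\ll t/\log t$. Since $s\ll v$ keeps $s/(v\log v)$ bounded, exponentiating gives $E(s,v)=1+O(s/(v\log v))$, as required.

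The only point that needs some care is the uniformity in $s$ of the estimate $f_p'(s)\ll p^{-2}$: one must use the hypothesis $s\ll v$ to ensure that $(1-p^{-1})^{s+1}$, and hence $1+p(1-p^{-1})^{s+1}$, does not shrink as $s$ grows, and this is exactly where $s\ll v$ is needed. Everything else is a formal rearrangement of the Euler product followed by a routine tail bound for $\sum_p p^{-2}$, so I do not anticipate a genuine obstacle. As consistency checks: the relation $f_p(0)=0$ is what forces the error term to carry a factor $s$ rather than merely a bounded quantity, matching the statement; and letting $v\to\infty$ sends $E(s,v)\to1$ and recovers the usual Euler product for $W(s)/t_u^s$.
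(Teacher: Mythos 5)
Your proof is correct and follows essentially the same route as the paper: the identical factorization of each Euler factor, applied with the ``small-$p$'' form for $p\le u$ and the ``large-$p$'' form for $u<p\le v$, together with a tail bound of size $s/(v\log v)$ for $p>v$. The only (harmless) difference is cosmetic: you bound the tail via $f_p(0)=0$ and an integral of the logarithmic derivative, whereas the paper simply expands $(1-p^{-1})^{-s}=e^{O(s/p)}$ and notes each tail factor is $1+O(s/p^2)$.
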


\begin{proof}
The contribution from primes $p>v$ to the product \eqref{W} is
\begin{equation*}
\begin{split}
   \prod_{p>v}\left(1 + \frac{(1-p^{-1})^{-s}-1}{p }\right)
   & = \prod_{p>v}\left(1 + \frac{1}{p}
  \left(e^{O\left(\frac{s}{p}\right)}-1\right)\right)\\
   & =\prod_{p>v}\left(1 + O\left(\frac{s}{p^2}\right)\right)
   =1 + O\left( \frac{s}{v \log v}\right).
\end{split}
\end{equation*}
For primes $p$ in the range $u < p\le v$ we write
\begin{equation*}
\prod_{u < p\le v} \left(1 + \frac{(1-p^{-1})^{-s}-1}{p }\right)
 = \prod_{u < p\le v} (1-p^{-1}) \prod_{u < p\le v} \left(1 + p^{-1}(1-p^{-1})^{-s-1}\right) .
\end{equation*}
Finally, the product over small primes is
\begin{equation*}
\prod_{p\le u} \frac{(1-p^{-1})^{-s}}{p}
\prod_{p\le u}\left(1+p\left(1-p^{-1}\right)^{s+1}\right) 
=\frac{t_u^s}{P_u}
\prod_{p\le u}\left(1+p\left(1-p^{-1}\right)^{s+1}\right) .
\end{equation*}
\end{proof}

\begin{lemma}\label{We}
Let $2 \le u \le v$. For $v\gg s = u\log u + O(u)$ we have
 \begin{equation*}
\frac{W(s)}{t_u^s}= 
\frac{t_u}{t_v \, P_u} \left(1+O\left(\frac{1}{\log u} \right)\right) 
\prod_{p\le u} \left(1 + p\, e^{-s/p} \right) 
\prod_{u < p\le v} \left(1 + p^{-1}\, e^{s/p}\right) .
\end{equation*}
\end{lemma}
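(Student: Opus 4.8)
The plan is to deduce Lemma \ref{We} from Lemma \ref{WF} by specializing $s = u\log u + O(u)$ and showing that, in this range, each local factor of Lemma \ref{WF} matches the corresponding exponential factor of Lemma \ref{We} up to the claimed multiplicative error. First I would record the elementary estimate $(1-p^{-1})^{\pm(s+1)} = e^{\mp (s+1)/p} \cdot e^{O((s+1)/p^2)}$, obtained by expanding $\log(1-p^{-1}) = -1/p - 1/(2p^2) - \cdots = -1/p + O(1/p^2)$. Since $s \ll v$ on the whole range $p \le v$, the correction $e^{O(s/p^2)}$ is $1 + O(s/p^2)$, which is uniformly bounded; I would then multiply these correction factors across the relevant prime ranges, using $\sum_{p} 1/p^2 \ll 1$ to see that the total distortion in each product is a bounded factor — but that is not yet good enough, so the real work is to see that after accounting for the size of each factor the distortion is actually $1 + O(1/\log u)$.

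The key point is the size balance. For $s = u\log u + O(u)$, the factor $1 + p^{-1}e^{s/p}$ for $u < p \le v$ is largest near $p \approx u$, where $e^{s/p} \approx e^{\log u} = u$, so $1 + p^{-1}e^{s/p} \approx 2$; it decays as $p$ grows. For $p \le u$ the factor $1 + p\,e^{-s/p}$ is close to $1$ except for $p$ near $u$. Thus the ``action'' in both products is concentrated around primes $p$ comparable to $u$, where $s/p^2 \asymp (\log u)/p \asymp (\log u)/u$. So for a single prime near $u$ the relative error from replacing $(1-p^{-1})^{\pm(s+1)}$ by $e^{\mp s/p}$ is $O((\log u)/u)$, and summing the logarithms of these corrections over $p \le v$ I would bound $\sum_{p \le v} s/p^2 \ll s/(u\log u) \asymp 1$; to upgrade this to $O(1/\log u)$ I would split at $p \le u$ versus $p > u$ and note that in the second range $\sum_{p > u} s/p^2 \ll s/(u\log u) = O(1)$ is too crude — instead I would observe that in Lemma \ref{WF} the factor $1+p^{-1}(1-p^{-1})^{-s-1}$ already differs from $1+p^{-1}e^{s/p}$ multiplicatively by $1+O(1/p)$ only when $e^{s/p}$ is bounded, i.e. $p \gg s/\log(\cdot)$, and one handles the transitional primes $p \asymp u$ separately where there are only $O(u/\log u)$ of them each contributing $O((\log u)/u \cdot \text{something})$. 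I would also fold the shift from $s+1$ to $s$ in the exponent (a factor $e^{\pm 1/p} = 1 + O(1/p)$) into the same bookkeeping, noting $\sum_{p\le v} 1/p^2$ converges and the single prime $p=2$ etc. contributes $O(1)$ which must be absorbed — so in fact one keeps $p\le u$ factors with exponent $s$ exactly as written and checks the endpoint primes by hand.

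Concretely the steps are: (1) apply Lemma \ref{WF} with the given $u,v$, legitimate since $s = u\log u + O(u) \ll v$; (2) in the range $u < p \le v$, write $p^{-1}(1-p^{-1})^{-s-1} = p^{-1} e^{s/p} \exp\{s\,O(1/p^2) + O(1/p)\}$ and show $\prod_{u<p\le v}\bigl(1+p^{-1}e^{s/p}e^{O(s/p^2+1/p)}\bigr) = \bigl(1+O(1/\log u)\bigr)\prod_{u<p\le v}\bigl(1+p^{-1}e^{s/p}\bigr)$, using that the logarithm of the product is dominated by primes $p \le u^{1+\varepsilon}$ and that there $e^{s/p}\le u^{1+o(1)}$ so the per-factor relative error is $O((\log u)/u)$ while the number of such primes times that error is $o(1)$, and for $p > u^{1+\varepsilon}$ the factors are $1+O(p^{-1+o(1)})$ so the whole tail is $1+o(1/\log u)$; (3) similarly in $p \le u$ write $p(1-p^{-1})^{s+1} = p\,e^{-s/p}\exp\{O(s/p^2)\}$ — here keeping the exponent as $s$ (not $s+1$) as stated requires noting $(1-p^{-1})^{s+1}=(1-p^{-1})^s(1-p^{-1})$ and the extra $(1-p^{-1})$ is $1+O(1/p)$, again harmless after the same concentration argument since the dominant factors have $p\asymp u$; (4) absorb the resulting error factor, together with the $1+O(s/(v\log v)) = 1+O(1/\log u)$ from Lemma \ref{WF} (using $s\ll v$ and $v \gg s$, indeed $s/(v\log v) \le 1/\log v \ll 1/\log u$ when $v\ge u$... in fact $v\gg s$ gives $s/(v\log v) \ll 1/\log v$, and $1/\log v \ll 1/\log u$ needs $v$ not much smaller than $u$, which holds since $v\ge u$), into the single error term $1+O(1/\log u)$, leaving the two product factors exactly as in the statement. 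The main obstacle is step (2)–(3): making the bound $1+O(1/\log u)$ rather than merely $1+O(1)$, which forces one to exploit the concentration of the products near $p \asymp u$ rather than bounding $\sum_p s/p^2$ term by term; this is a short but not entirely mechanical argument, and one should be careful that the exponent is written as $s$ and not $s+1$ in the final statement, which is legitimate precisely because the $(1-p^{-1})^{\pm 1}$ discrepancy is of the same order as errors already being discarded.
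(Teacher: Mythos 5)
Your overall route is the paper's: specialize Lemma \ref{WF} with the given $u,v$, replace $(1-p^{-1})^{\pm(s+1)}$ by $e^{\mp s/p}$ in each local factor, and show the accumulated multiplicative distortion is $1+O(1/\log u)$; the $s+1$ versus $s$ shift and the $O(s/(v\log v))\ll 1/\log v\le 1/\log u$ term from Lemma \ref{WF} are handled just as in the paper. The gap is in the one step that carries the lemma, namely the claim that the distortion is $O(1/\log u)$. The per-factor relative error is $\ll \frac{s}{p^3}e^{s/p}$ for $u<p\le v$ and $\ll \frac{s}{p}e^{-s/p}$ for $p\le u$, and the bookkeeping you offer does not produce the required bound: ``per-factor error $O((\log u)/u)$ for $p\le u^{1+\varepsilon}$, and the number of such primes times that error is $o(1)$'' is arithmetically false, since $\pi(u^{1+\varepsilon})\cdot(\log u)/u\asymp u^{\varepsilon}$; and your fallback, $O(u/\log u)$ transitional primes each contributing $O(((\log u)/u)\cdot\mbox{something})$, gives only $O(1)$ with the ``something'' left unspecified. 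Moreover, even the $o(1)$ you aim for in that sentence would be weaker than the stated error $1+O(1/\log u)$.

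What is missing is the actual evaluation of the two error sums, which is exactly where the paper's proof does its work: $\sum_{p>u}\frac{s}{p^3}e^{s/p}\ll\int_u^\infty\frac{s}{x^3}e^{s/x}\frac{dx}{\log x}\ll\frac{1}{u\log u}\int_u^\infty\frac{s}{x^2}e^{s/x}\,dx=\frac{e^{s/u}-1}{u\log u}\asymp\frac{1}{\log u}$, because $e^{s/u}\asymp u$ when $s=u\log u+O(u)$; and symmetrically $\sum_{p\le u}\frac{s}{p}e^{-s/p}\ll\frac{u}{\log u}\int_2^u\frac{s}{x^2}e^{-s/x}\,dx\asymp\frac{u}{\log u}\,e^{-s/u}\asymp\frac{1}{\log u}$, where in each case one pulls out the monotone factor ($1/(x\log x)$, resp.\ $x/\log x$) at $x=u$ and uses that $\frac{s}{x^2}e^{\pm s/x}$ has the exact antiderivative $\mp e^{\pm s/x}$. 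This is the precise form of the ``concentration near $p\asymp u$'' you appeal to: the effective window has width $\asymp u/\log u$, hence contains $\asymp u/\log^2 u$ primes, not $u/\log u$. A further small point: for $p\le u$ with $p\ll\sqrt{s}$ you cannot expand $e^{O(s/p^2)}=1+O(s/p^2)$ as you state; there one should instead note that both $1+p(1-p^{-1})^{s+1}$ and $1+pe^{-s/p}$ are $1+O(pe^{-s/p})$ and that $pe^{-s/p}\le\frac{s}{p}e^{-s/p}$ in that range, so the same error bound applies and the argument goes through.
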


\begin{proof}
We write
\begin{equation*}
\begin{split}
\prod_{u < p\le v} \left(1 + p^{-1}(1-p^{-1})^{-s-1}\right) = 
\prod_{u < p\le v} \left(1 + p^{-1}\exp\left(\frac{s}{p} +
  O\left(\frac{s}{p^2}\right)\right)\right)\\
 = \prod_{u < p\le v} \left(1 + p^{-1}e^{s/p}\right) 
 \left(1 + O\left(\frac{s}{p^3} e^{s/p}\right)\right).
\end{split}
\end{equation*}

After taking the logarithm of the last expression, the contribution from the error term is
\begin{equation}\label{primesum1}
\begin{split}
\ll \sum_{p> u } \frac{s}{p^3} e^{s/p} 
& \asymp \int_{u}^\infty \frac{s}{x^3} e^{s/x}\frac{dx}{\log x} \\
& \asymp \frac{1}{ u \log u}  \int_{u}^\infty \frac{s}{x^2} e^{s/x} dx \\
& = \frac{1}{ u \log u} e^{s/u} \asymp \frac{1}{ \log u} .
\end{split}
\end{equation}
Thus
\begin{equation*}
\prod_{u < p\le v} \left(1 + p^{-1}(1-p^{-1})^{-s-1}\right) = 
\left(1+O\left(\frac{1}{\log u}\right)\right)
\prod_{u < p\le v} \left(1 + p^{-1}e^{s/p}\right) .
\end{equation*}

Similarily,
\begin{equation*}
\begin{split}
\prod_{ p\le u} \left(1 + p\, (1-p^{-1})^{s+1}\right) = 
\prod_{p\le u} \left(1 + p\, \exp\left(-\frac{s}{p} +
  O\left(\frac{s}{p^2}\right)\right)\right)\\
 = \prod_{p\le u} \left(1 + p\, e^{-s/p}\right) 
 \left(1 + O\left(\frac{s}{p} \, e^{-s/p}\right)\right).
\end{split}
\end{equation*}
The contribution from the error term to the logarithm of the last expression is
\begin{equation}\label{primesum2}
\begin{split}
\asymp \sum_{p\le u } \frac{s}{p} \, e^{-s/p} 
& \asymp \int_2^{u} \frac{s}{x} \, e^{-s/x}\frac{dx}{\log x} \\
& \asymp \frac{u}{\log u}  \int_{2}^{u} \frac{s}{x^2}\, e^{-s/x} dx \\
& \asymp \frac{u}{ \log u} \, e^{-s/u} \asymp \frac{1}{\log u}.
\end{split}
\end{equation}
Thus
\begin{equation*}
\prod_{p\le u} \left(1 + p\, (1-p^{-1})^{s+1}\right) = 
\left(1+O\left(\frac{1}{\log u}\right)\right)
\prod_{p\le u} \left(1 + p\, e^{-s /p}\right) .
\end{equation*}
The result now follows from Lemma \ref{WF}.
\end{proof}

\begin{lemma}\label{Wz}
Let $s\ge e$ and define $z$ by $s=z\log z$. For $m\ge 2$ we have
\begin{equation*}
W(s) = \exp \left( z \log z \log(e^\gamma \log z) -z +z\sum_{j=2}^m \frac{b_j}{(\log z)^j}+ O_m\left(\frac{z}{(\log z)^{m+1}}\right)\right),
\end{equation*}
where
\begin{equation*}
b_2 = \frac{\pi^2}{6}, \quad b_3= -\frac{\pi^2}{6}, \quad b_4=\frac{\pi^2}{6}+\frac{7\pi^4}{60}.
\end{equation*}
\end{lemma}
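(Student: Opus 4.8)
The plan is to apply Lemma~\ref{We} with the choices $u=z$ (so that $s=u\log u$ holds \emph{exactly}) and $v=z^2$, which satisfy its hypotheses once $s$ is large. Taking logarithms gives
\[
\log W(s) = s\log t_z + \log\frac{t_z}{t_{z^2}P_z} + \sum_{p\le z}\log\bigl(1+pe^{-s/p}\bigr) + \sum_{z<p\le z^2}\log\bigl(1+p^{-1}e^{s/p}\bigr) + O\Bigl(\tfrac1{\log z}\Bigr).
\]
The first two terms are handled by classical prime number theory. Mertens' theorem in the form $t_u = e^{\gamma}\log u\,\bigl(1+O(\exp(-c\sqrt{\log u}))\bigr)$ gives $s\log t_z = z\log z\,\log(e^{\gamma}\log z)+O\bigl(z\log z\,\exp(-c\sqrt{\log z})\bigr)$, which is the required main term together with a negligible error, while $\log\frac{t_z}{t_{z^2}P_z} = -\theta(z)+O(1) = -z+O\bigl(z\exp(-c\sqrt{\log z})\bigr)$ by the prime number theorem with the classical error term. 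Hence it remains to show that
\[
S := \sum_{p\le z}\log\bigl(1+pe^{-s/p}\bigr) + \sum_{z<p\le z^2}\log\bigl(1+p^{-1}e^{s/p}\bigr) = z\sum_{j=2}^m \frac{b_j}{(\log z)^{j}} + O_m\Bigl(\frac{z}{(\log z)^{m+1}}\Bigr),
\]
and to pin down the first few coefficients $b_j$.

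To evaluate $S$ I would first replace each prime sum by the integral $\int(\,\cdot\,)\,dx/\log x$ over the same range. Integrating by parts against $\pi(x)-\li(x)=O(x\exp(-c\sqrt{\log x}))$, and using that the integrands and their derivatives are of modest size with essentially all their mass near $x=z$, one sees that the difference is $O(z\exp(-c'\sqrt{\log z}))$, hence negligible. Both resulting integrals are then treated by the single substitution $r=s/x-\log z$, i.e. $x=s/(\log z+r)$. Writing $\varepsilon:=1/\log z$, one computes $xe^{-s/x}=e^{-r}/(1+\varepsilon r)$, $x^{-1}e^{s/x}=e^{r}(1+\varepsilon r)$, and $dx/\log x=\frac{z}{(\log z)^2}\,(1+\varepsilon r)^{-2}\bigl(1-\varepsilon\log(1+\varepsilon r)\bigr)^{-1}dr$; the first sum corresponds to $r\in[0,\infty)$ and the second to $r$ running from just above $-\log z$ up to $0$. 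Therefore
\[
S = \frac{z}{(\log z)^2}\int \Phi(r,\varepsilon)\,dr + (\text{negligible}),\qquad \Phi(r,\varepsilon) := \frac{\log\bigl(1+h(r,\varepsilon)\bigr)}{(1+\varepsilon r)^{2}\,\bigl(1-\varepsilon\log(1+\varepsilon r)\bigr)},
\]
with $h(r,\varepsilon)=e^{-r}/(1+\varepsilon r)$ for $r\ge 0$ and $h(r,\varepsilon)=e^{r}(1+\varepsilon r)$ for $r<0$. The contribution to $S$ of the ranges $r\ge(\log z)^{1/2}$ and $r\le-(\log z)^{1/2}$ is negligible --- in the former because $\Phi(r,\varepsilon)\ll e^{-r}$, in the latter by a direct estimate giving $O(\sqrt z)$ --- so on the range $|r|\le(\log z)^{1/2}$, where $\varepsilon|r|\to 0$, one expands $\Phi(r,\varepsilon)=\Phi_0(r)+\varepsilon\Phi_1(r)+\ldots+\varepsilon^{m-2}\Phi_{m-2}(r)+O_m\bigl(\varepsilon^{m-1}(1+|r|)^{c_m}e^{-|r|}\bigr)$ and integrates term by term, extending each integral to the whole line with negligible error. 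This gives $b_j=\int_{-\infty}^{\infty}\Phi_{j-2}(r)\,dr$, and every such integral is a combination of $\int_0^{\infty}r^{k}\log(1+e^{-r})\,dr$ and $\int_0^{\infty}r^{k}e^{-r}(1+e^{-r})^{-1}\,dr$, which are elementary and expressible through $\zeta(2),\zeta(4),\ldots$. In particular $\Phi_0(r)=\log(1+e^{-|r|})$ gives $b_2=2\int_0^{\infty}\log(1+e^{-r})\,dr=\zeta(2)=\pi^2/6$; in the first-order term the two Jacobian contributions $(1+\varepsilon r)^{-2}$ cancel between the halves $r>0$ and $r<0$, leaving $b_3=-2\int_0^{\infty}re^{-r}(1+e^{-r})^{-1}\,dr=-\zeta(2)=-\pi^2/6$; and the analogous second-order computation yields $b_4=\pi^2/6+7\pi^4/60$.

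The main obstacle is bookkeeping: extracting $b_3$, $b_4$ (and further coefficients) requires carrying the Taylor expansion of $\Phi$ to the right order while keeping track of three simultaneous sources of $1/\log z$ corrections --- the Jacobian factor $(1+\varepsilon r)^{-2}$, the factor $(\log x)^{-1}=\varepsilon\bigl(1-\varepsilon\log(1+\varepsilon r)\bigr)^{-1}$, and the argument $h(r,\varepsilon)$ of the logarithm --- together with the higher Taylor terms of $\log(1+h)$. A second point that needs care is that the error in Mertens' theorem is amplified by the factor $s\asymp z\log z$, so one genuinely needs $t_u=e^{\gamma}\log u\,\bigl(1+o((\log u)^{-m-2})\bigr)$; this is precisely what the prime number theorem provides. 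Everything else --- the passage from sums to integrals, the truncation of the $r$-range, and the choice $v=z^2$ --- affects $S$ only by amounts that are $o\bigl(z(\log z)^{-m-1}\bigr)$.
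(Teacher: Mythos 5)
Your proposal is correct, and it shares the paper's skeleton up to the two hard integrals: like the paper you invoke Lemma \ref{We} with $u=z$ (the paper takes $v=s$ rather than $v=z^2$, an immaterial difference of size $O(1)$), handle $s\log t_z$ and $\log\bigl(t_z/(t_v P_z)\bigr)=-\theta(z)+O(1)$ by Mertens' theorem and the prime number theorem with the classical error term, and convert the two prime sums into the integrals $\int_e^z\log\left(1+xe^{-s/x}\right)dx/\log x$ and $\int_z^v\log\left(1+x^{-1}e^{s/x}\right)dx/\log x$. Where you genuinely diverge is in evaluating these integrals: the paper expands $\log(1+w)$ into its alternating series and treats each term $I_k(k,1)$, $J_k(k,1)$ by the integration-by-parts recursions \eqref{recursion} and \eqref{Jrecursion}, obtaining rational coefficients $q_j(k)$, $r_j(k)$ and then resumming over $k$, so that each $b_j$ appears as an alternating series in $k$; you instead localize at $x=z$ via $r=s/x-\log z$ and expand the full integrand uniformly in $\varepsilon=1/\log z$, so that $b_j=\int_{\mathbb R}\Phi_{j-2}(r)\,dr$. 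Your substitution and Jacobian are right, and I checked that your scheme reproduces $b_2$, $b_3$ and $b_4$: at second order one gets $\int_0^\infty\bigl(r^2e^{-r}(1+e^{-r})^{-1}-r^2e^{-2r}(1+e^{-r})^{-2}+6r^2\log(1+e^{-r})\bigr)dr=2\eta(2)+12\eta(4)=\pi^2/6+7\pi^4/60$, the $\eta(3)$ contributions of the first two integrals cancelling --- so your remark that everything reduces to $\zeta(2),\zeta(4),\dots$ is true only after this cancellation, not integral by integral. Your route trades the paper's elementary but index-heavy recursion for a Laplace-type uniform expansion, which makes the origin of the coefficients more transparent, at the cost of justifying the uniform Taylor remainder $O_m\bigl(\varepsilon^{m-1}(1+|r|)^{c_m}e^{-|r|}\bigr)$ on $|r|\le\sqrt{\log z}$ (fine there, since $1+\varepsilon r$ stays bounded away from $0$). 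Two small slips, neither fatal: the tail $r\le-\sqrt{\log z}$ contributes about $z\,e^{-(1+o(1))\sqrt{\log z}}(\log z)^{-2}$, not $O(\sqrt z)$, but this is still $\ll_m z(\log z)^{-m-1}$; and for very large $r$ (i.e.\ $x$ near $e$) the bound $\Phi(r,\varepsilon)\ll e^{-r}$ should carry an extra factor $\log z$ from $(1-\varepsilon\log(1+\varepsilon r))^{-1}\le\log z$, again harmless.
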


\begin{proof}
We apply Lemma \ref{We} with $u=z$ and $v=s$ to obtain
\begin{equation}\label{WI}
\begin{split}
\log W(s) = & -s \sum_{p\le z}\log(1-p^{-1})  - \sum_{p\le z} \log p  +O\left(\frac{\log_2 z}{\log z} \right)\\
& \ +\sum_{p\le z} \log \left(1 + p\, e^{-s/p} \right) 
+ \sum_{z < p\le s} \log \left(1 + p^{-1}\, e^{s/p}\right) \\
= & z \log z \log(e^\gamma \log z) -z +O\left(\frac{z}{\exp(\sqrt{\log z})} \right) \\
 & \  + \int_e^{z} \log \left(1 + x\, e^{-s/x} \right) \frac{dx}{\log x} 
+ \int_{z}^{s} \log \left(1 + x^{-1}\, e^{s/x}\right)\frac{dx}{\log x},
\end{split}
\end{equation}
by a strong form of Mertens' Theorem \cite{Vino} and a standard application of the prime number theorem.
We need to estimate the two integrals in \eqref{WI}. The first integral is 
\begin{equation}\label{logsum}
\sum_{k\ge 1} \frac{(-1)^{k+1}}{k} \int_e^{z}  x^k\, e^{-sk/x}\frac{dx}{\log x} = \sum_{k\ge 1} \frac{(-1)^{k+1}}{k} I_k(k,1),
\end{equation}
where
\begin{equation}\label{Ikdef}
\begin{split}
I_k(a,b) & := \int_e^{z}  x^a\, e^{-sk/x}\frac{dx}{(\log x)^b} 
=\frac{1}{sk}\int_e^{z} \left( \frac{sk}{x^2} \, e^{-sk/x} \right) \frac{x^{a+2}}{(\log x)^b} \, dx \\
& \le \frac{z^{a+2}}{sk (\log z)^b} \int_e^{z} \frac{sk}{x^2} \, e^{-sk/x} dx 
\le \frac{z^{1+a-k}}{k (\log z)^{b+1}},
\end{split}
\end{equation}
for $a\ge b$, since $x/\log x$ is increasing for $x\ge e$. Integration by parts applied to the second integral in 
\eqref{Ikdef} shows that 
\begin{equation}\label{recursion}
I_k(a,b) =  \frac{z^{1+a-k}}{k (\log z)^{b+1}} - \frac{a+2}{sk}I_k(a+1,b)+\frac{b}{sk} I_k(a+1,b+1) + O_m(1/(sk)),
\end{equation}
for  $a\le k+m$. After $m-1$ iterations of \eqref{recursion}, starting with $I_k(k,1)$, we find that
\begin{equation}\label{Ik}
I_k(k,1) = \sum_{j=2}^m \frac{z}{(\log z)^j} \, q_j(k) + O_m\left(\frac{z}{k (\log z)^{m+1}}\right),
\end{equation}
where $q_j(k)$ is a rational function of $k$ with $q_j(k) = O(1/k)$. In particular, 
\begin{equation*}
q_2(k) = \frac{1}{k}, \quad q_3(k)= -\frac{k+2}{k^2}, \quad q_4(k)=\frac{1}{k^2}+\frac{(k+2)(k+3)}{k^3}.
\end{equation*}
Inserting \eqref{Ik} into \eqref{logsum} gives
\begin{equation}\label{Iresult}
\int_e^{z} \log \left(1 + x\, e^{-s/x} \right) \frac{dx}{\log x} 
= z \, \sum_{j=2}^m \frac{\theta_j}{(\log z)^j}  + O_m\left(\frac{z}{(\log z)^{m+1}}\right),
\end{equation}
where 
\begin{equation*}
\theta_j = \sum_{k\ge 1 } (-1)^{k+1} \frac{q_j(k)}{k} .
\end{equation*}
Similarily, the second integral in \eqref{WI} is 
\begin{equation}\label{Jlogsum}
\sum_{k\ge 1} \frac{(-1)^{k+1}}{k} \int_z^{s}  x^{-k}\, e^{sk/x}\frac{dx}{\log x} = \sum_{k\ge 1} \frac{(-1)^{k+1}}{k} J_k(k,1),
\end{equation}
where
\begin{equation}\label{Jkdef}
\begin{split}
J_k(a,b) & := \int_z^{s}  x^{-a}\, e^{sk/x}\frac{dx}{(\log x)^b} 
=\frac{1}{sk}\int_z^{s} \left( \frac{sk}{x^2} \, e^{sk/x} \right) \frac{x^{-a+2}}{(\log x)^b} \, dx \\
& \asymp \frac{1}{sk (\log z)^b} \int_k^{k\log z} e^w \, \left(\frac{sk}{w}\right)^{2-a}  dw
=O_m\left(\frac{z^{1+k-a}}{k (\log z)^{b+1}}\right),
\end{split}
\end{equation}
for $a\ge -m$.
Integration by parts applied to the second integral in \eqref{Jkdef} shows that 
\begin{equation}\label{Jrecursion}
J_k(a,b) =  \frac{z^{1+k-a}}{k (\log z)^{b+1}} + \frac{2-a}{sk}J_k(a-1,b)-\frac{b}{sk} J_k(a-1,b+1) + O_m\left(k^{-1}(e/s)^{a-1}\right),
\end{equation}
for $k-a \le m$. After $m-1$ iterations of \eqref{Jrecursion}, starting with $J_k(k,1)$, we find that
\begin{equation}\label{Jk}
J_k(k,1) = \sum_{j=2}^m \frac{z}{(\log z)^j} \, r_j(k) + O_m\left(\frac{z}{k (\log z)^{m+1}}\right),
\end{equation}
where $r_j(k)$ is a rational function of $k$ with $r_j(k) = O(1/k)$. In particular, 
\begin{equation*}
r_2(k) = \frac{1}{k}, \quad r_3(k)= \frac{2-k}{k^2}, \quad r_4(k)=\frac{(2-k)(3-k)}{k^3}-\frac{1}{k^2}.
\end{equation*}
Inserting \eqref{Jk} into \eqref{Jlogsum} gives
\begin{equation}\label{Jresult}
\int_z^{s} \log \left(1 + x^{-1}\, e^{s/x} \right) \frac{dx}{\log x} 
= z \, \sum_{j=2}^m \frac{\rho_j}{(\log z)^j}  + O_m\left(\frac{z}{(\log z)^{m+1}}\right),
\end{equation}
where
\begin{equation*}
\rho_j = \sum_{k\ge 1} (-1)^{k+1} \frac{r_j(k)}{k} .
\end{equation*}
Let $b_j=\theta_j + \rho_j$, then
\begin{equation*}
b_2 = \sum_{k\ge 1} \frac{(-1)^{k+1}}{k} (q_2(k)+r_2(k)) = 2 \sum_{k\ge 1} \frac{(-1)^{k+1}}{k^2} 
=2 \sum_{k\ge 1} \frac{1}{k^2} - 4  \sum_{k\ge 1} \frac{1}{(2k)^2} 
=\frac{\pi^2}{6},
\end{equation*}
\begin{equation*}
b_3 = \sum_{k\ge 1} \frac{(-1)^{k+1}}{k} (q_3(k)+r_3(k)) = -2 \sum_{k\ge 1} \frac{(-1)^{k+1}}{k^2} = -\frac{\pi^2}{6} ,
\end{equation*}
and
\begin{equation*}
b_4 = \sum_{k\ge 1} \frac{(-1)^{k+1}}{k} (q_4(k)+r_4(k)) =  \sum_{k\ge 1} \frac{(-1)^{k+1}}{k}\left(\frac{2}{k}+\frac{12}{k^3}\right) =
\frac{\pi^2}{6} +\frac{7\pi^4}{60} .
\end{equation*}
The result now follows from combining \eqref{WI}, \eqref{Iresult} and \eqref{Jresult}.
\end{proof}

\begin{lemma}\label{Wy}
For $t\ge 1$ and $y=  e^{t \, e^{-\gamma}}$ we have
\begin{equation*}
\min_{s\ge e} \frac{W(s)}{t^s} = \exp \left(-y +y\sum_{k=2}^m \frac{c_k}{(\log y)^k}+ O_m\left(\frac{y}{(\log y)^{m+1}}\right)\right),
\end{equation*}
where
\begin{equation*}
c_2 = \frac{\pi^2}{6}, \quad c_3= -\frac{\pi^2}{6}, \quad c_4=\frac{\pi^2}{6}+\frac{37\pi^4}{360}.
\end{equation*}
\end{lemma}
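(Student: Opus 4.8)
The plan is to realize the quantity in question as $\min_{s\ge e}g(s)$, where $g(s):=\log\!\big(W(s)/t^{s}\big)=\log W(s)-s\log t$, and to extract its asymptotic expansion from Lemma \ref{Wz}. Since $y=e^{te^{-\gamma}}$ gives $\log t=\gamma+\log\log y$, the substitution $s=z\log z$ (an increasing bijection of $[e,\infty)$ onto itself) together with Lemma \ref{Wz} turns the problem into minimizing
\[
F(z):=z\log z\,\log\!\Big(\frac{\log z}{\log y}\Big)-z+z\sum_{j=2}^{m}\frac{b_j}{(\log z)^{j}}+O_m\!\Big(\frac{z}{(\log z)^{m+1}}\Big)
\]
over $z\ge e$, where we used $z\log z\,\log(e^{\gamma}\log z)-s\log t=z\log z\,\log(\log z/\log y)$. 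For bounded $t$ the assertion is immediate, both sides then lying between two positive constants, so I assume $t$ large.

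First I would localize the minimizer. The map $s\mapsto\log W(s)$ is convex, since each factor $\log\!\big(1+((1-p^{-1})^{-s}-1)/p\big)$ has positive second derivative in $s$; hence $g$ is strictly convex and has a unique minimizer, whose $z$-value I call $z^{*}$, so $\min_{s\ge e}g(s)=F(z^{*})$. Comparing, via Lemma \ref{Wz}, the value of $F$ at $z=y$ with its values at the two endpoints of $\mathcal N:=\{z:|\log z-\log y|\le(\log y)^{-1}\}$ — a second-order Taylor expansion shows $F$ is larger at each endpoint by $\asymp y/(\log y)^{2}$ — convexity (hence unimodality of $F$) forces $z^{*}\in\mathcal N$. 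On $\mathcal N$ we have $z\asymp y$, Lemma \ref{Wz} is sharp, and the explicit truncation $\widetilde F(z):=z\log z\,\log(\log z/\log y)-z+z\sum_{j=2}^{m}b_j/(\log z)^{j}$ is strictly convex (one computes $\widetilde F''\asymp y$ there, in the variable $w=\log z$); let $z_0\in\mathcal N$ be its minimizer, so $\widetilde F'(z_0)=0$, i.e.
\[
(w+1)\log\!\Big(\frac{w}{\log y}\Big)=-\sum_{j=2}^{m}b_j\Big(\frac{1}{w^{j}}-\frac{j}{w^{j+1}}\Big),\qquad w=\log z_0 ,
\]
whence $w=\log y+O((\log y)^{-2})$. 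Since $F=\widetilde F+O_m(y/(\log y)^{m+1})$ on $\mathcal N$ and $z^{*},z_0\in\mathcal N$, the inequalities $\widetilde F(z_0)\le\widetilde F(z^{*})=F(z^{*})+O_m(\cdot)$ and $F(z^{*})\le F(z_0)=\widetilde F(z_0)+O_m(\cdot)$ give $\min_{s\ge e}g(s)=\widetilde F(z_0)+O_m\!\big(y/(\log y)^{m+1}\big)$.

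The last step is to evaluate $\widetilde F(z_0)$. Writing $h(w):=\widetilde F(e^{w})$, which is smooth near $\log y$ and attains its minimum on $\mathcal N$ at $w_0:=\log z_0=\log y+O((\log y)^{-2})$, the standard expansion of a smooth function at a nondegenerate minimum gives
\[
\widetilde F(z_0)=h(\log y)-\frac{h'(\log y)^{2}}{2\,h''(\log y)}+O_m\!\Big(\frac{y}{(\log y)^{5}}\Big),
\]
the omitted terms being of order $h'(\log y)^{3}/h''(\log y)^{2}\asymp y/(\log y)^{6}$. Here $h(\log y)=-y+y\sum_{j=2}^{m}b_j/(\log y)^{j}$, while differentiating gives $h'(\log y)=y\sum_{j=2}^{m}b_j\big((\log y)^{-j}-j(\log y)^{-j-1}\big)=\tfrac{b_2 y}{(\log y)^{2}}+O\!\big(y(\log y)^{-3}\big)$ and $h''(\log y)=y(1+O((\log y)^{-1}))$. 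Substituting,
\[
\widetilde F(z_0)=-y+\frac{b_2\,y}{(\log y)^{2}}+\frac{b_3\,y}{(\log y)^{3}}+\frac{(b_4-\tfrac12 b_2^{2})\,y}{(\log y)^{4}}+O\!\Big(\frac{y}{(\log y)^{5}}\Big),
\]
so $c_2=b_2=\pi^{2}/6$, $c_3=b_3=-\pi^{2}/6$ and $c_4=b_4-\tfrac12 b_2^{2}=\tfrac{\pi^{2}}{6}+\tfrac{7\pi^{4}}{60}-\tfrac{\pi^{4}}{72}=\tfrac{\pi^{2}}{6}+\tfrac{37\pi^{4}}{360}$, as claimed. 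Further coefficients $c_k$ follow by carrying the expansion at the minimum and the expansion of $h'(\log y)$ to more terms.

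The step I expect to be the main obstacle is the bookkeeping in this last evaluation. To reach the coefficient $c_m$ one must keep every contribution of size $y/(\log y)^{m+1}$, including those arising from the deviation of the minimizer $z_0$ from $y$: it is precisely the term $-h'(\log y)^{2}/(2\,h''(\log y))\asymp y/(\log y)^{4}$ — equivalently the quadratic term in $\Delta=\log z_0-\log y$, invisible if one naively replaces $z_0$ by $y$ — that produces the discrepancy $c_4\ne b_4$, and missing such cross terms is the easiest way to get the wrong coefficients. A secondary point requiring care is verifying the convexity statements and the localization to $\mathcal N$, so as to justify the reduction of $\min F$ to $\widetilde F(z_0)$ with an error no larger than $O_m(y/(\log y)^{m+1})$.
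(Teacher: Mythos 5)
Your proposal is correct and follows essentially the paper's route: reduce via Lemma \ref{Wz} with $s=z\log z$ to minimizing $z\log z\log(\log z/\log y)-z+z\sum b_j(\log z)^{-j}$, locate the minimizer at $\log z=\log y+O((\log y)^{-2})$, and expand, arriving at the same $c_2,c_3,c_4$ (your correction $-\tfrac12 b_2^2$ indeed reproduces $37\pi^4/360$). The only organizational differences are that you justify the localization by strict convexity of $\log W(s)$ (which the paper proves separately in Lemma \ref{g2} and here replaces by sign considerations for $f$ at $e$, $y$, $ey$), and you evaluate the minimum by the second-order formula $h(\log y)-h'(\log y)^2/(2h''(\log y))$ at the reference point $\log y$, whereas the paper inverts the critical-point equation as a series in $1/\log y$ (its $\lambda_4$ term playing the role of your quadratic correction); both are equivalent bookkeeping for the same perturbative computation.
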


\begin{proof}
Let $t\ge 1$ be given. From Lemma \ref{Wz} we have 
\begin{equation}\label{hdef}
\log \frac{W(s)}{t^s} = z\left( \log z \log(\log z/\log y) -1 +\sum_{k=2}^m \frac{b_k}{(\log z)^k}+ O_m\left(\frac{1}{(\log z)^{m+1}}\right)\right)
=:h(z),
\end{equation}
where $s=z\log z$. 
We see that $h(y) \sim -y$ and $h(z)>0$ for $z\ge e y$, so that the minimum of $h(z)$ occurs at some $z\in [e, ey]$, where the error term
of \eqref{hdef} is uniformly $O_m\left(y/(\log y)^{m+1}\right)$. Therefore we only need to minimize
\begin{equation}\label{fz}
f(z) := z\left( \log z \log(\log z/\log y) -1 +\sum_{k=2}^m \frac{b_k}{(\log z)^k}\right).
\end{equation}
To that end we set $f'(z)=0$, which is equivalent to 
\begin{equation}\label{alpha}
\log y = \log z \exp \left( \sum_{k=2}^{m+1} \frac{\alpha_k}{(1+\log z) \log^k z} \right),
\end{equation}
where $\alpha_2 = b_2$, $\alpha_k = b_k-(k-1)b_{k-1}$ for $k=3, \ldots, m-1$, and $\alpha_{m+1}=-m b_m$. Thus
\begin{equation*}
\alpha_2 = b_2 = \frac{\pi^2}{6}, \quad \alpha_3=b_3-2b_2 = -\frac{\pi^2}{2}, \quad \alpha_4=b_4-3b_3=\frac{2\pi^2}{3}+\frac{7\pi^4}{60}.
\end{equation*}
Since $f(e) \sim -e\log\log y$, $f(y)\sim -y$, and $f(ey)>0$, the unique
solution to \eqref{alpha} is the minimizer of $f(z)$. We rewrite \eqref{alpha} as
\begin{equation}\label{beta}
\log z \log \left(\frac{\log z}{\log y}\right) = - \sum_{k=2}^{m} \frac{\beta_k}{(\log z)^{k}} + O\left(\frac{1}{(\log z)^{m+1}}\right),
\end{equation}
where $\beta_2=\alpha_2$ and $\beta_k=\alpha_k-\beta_{k-1}$ for $k=3, \ldots, m$. Thus
\begin{equation*}
\beta_2 = \alpha_2 = \frac{\pi^2}{6}, \quad \beta_3=\alpha_3-\beta_2 = -\frac{2\pi^2}{3}, \quad \beta_4=\alpha_4-\beta_3=\frac{4\pi^2}{3}+\frac{7\pi^4}{60}.
\end{equation*}
To express $z$ in terms of $y$ we first write \eqref{beta} as 
\begin{equation}\label{delta}
\begin{split}
\log y & = \log z \exp \left( \sum_{k=2}^{m} \frac{\beta_k}{(\log z)^{k+1}} + O\left(\frac{1}{(\log z)^{m+2}}\right)\right) \\
& =  \log z \left( 1 + \sum_{k=2}^{m} \frac{\delta_k}{(\log z)^{k+1}} + O\left(\frac{1}{(\log z)^{m+2}}\right)\right),
\end{split}
\end{equation}
where 
\begin{equation*}
\delta_2=\beta_2 = \frac{\pi^2}{6}, \quad \delta_3=\beta_3 = -\frac{2\pi^2}{3}, \quad \delta_4 = \beta_4=\frac{4\pi^2}{3}+\frac{7\pi^4}{60}.
\end{equation*}
Using series inversion on \eqref{delta} we obtain
\begin{equation}\label{eta}
\log z = \log y \left( 1 + \sum_{k=2}^{m} \frac{\eta_k}{(\log y)^{k+1}} + O\left(\frac{1}{(\log y)^{m+2}}\right)\right),
\end{equation}
where 
\begin{equation*}
\eta_2 = -\delta_2= -\frac{\pi^2}{6}, \quad \eta_3=-\delta_3=\frac{2\pi^2}{3}, \quad \eta_4 = -\delta_4 = -\frac{4\pi^2}{3}-\frac{7\pi^4}{60}.
\end{equation*}
We exponentiate \eqref{eta} to get
\begin{equation}\label{lambda}
\begin{split}
z & = y \exp \left( \sum_{k=2}^{m} \frac{\eta_k}{(\log y)^{k}} + O\left(\frac{1}{(\log y)^{m+1}}\right)\right) \\
& =  y\left( 1 + \sum_{k=2}^{m} \frac{\lambda_k}{(\log y)^{k}} + O\left(\frac{1}{(\log y)^{m+1}}\right)\right),
\end{split}
\end{equation}
where
\begin{equation*}
\lambda_2=\eta_2= -\frac{\pi^2}{6}, \quad \lambda_3 = \eta_3=\frac{2\pi^2}{3}, \quad \lambda_4 = \eta_4 +\frac{\eta_2^2}{2}=-\frac{4\pi^2}{3}-\frac{37\pi^4}{360}.
\end{equation*}
Combining \eqref{fz}, \eqref{beta} and \eqref{lambda} we see that $\min_{z}f(z)$ is
\begin{multline*}\label{mu}
y\left( 1 + \sum_{k=2}^{m} \frac{\lambda_k}{\log^k y} + O\left(\frac{1}{\log^{m+1} y}\right)\right) 
\left( -1 + \sum_{k=2}^{m} \frac{b_k-\beta_k}{\log^k z} + O\left(\frac{1}{\log^{m+1} z}\right)\right) \\
 = y\left( 1 + \sum_{k=2}^{m} \frac{\lambda_k}{(\log y)^{k}} + O\left(\frac{1}{(\log y)^{m+1}}\right)\right) 
\left( -1 + \sum_{k=2}^{m} \frac{\mu_k}{(\log y)^{k}} + O\left(\frac{1}{(\log y)^{m+1}}\right)\right),
\end{multline*}
where \eqref{delta} implies
\begin{equation*}
\mu_2 = b_2-\beta_2 = 0, \quad \mu_3 = b_3-\beta_3 = \frac{\pi^2}{2}, \quad \mu_4 = b_4-\beta_4 = -\frac{7\pi^2}{6}.
\end{equation*}
Thus
\begin{equation*}
\min_{z} f(z)=-y +y\sum_{k=2}^m \frac{c_k}{(\log y)^k}+ O_m\left(\frac{y}{(\log y)^{m+1}}\right),
\end{equation*}
where
\begin{equation*}
c_2 = \mu_2-\lambda_2 = \frac{\pi^2}{6}, \quad c_3=\mu_3-\lambda_3= -\frac{\pi^2}{6}, \quad c_4=\mu_4+\mu_2\lambda_2-\lambda_4=\frac{\pi^2}{6}+\frac{37\pi^4}{360}.
\end{equation*}
\end{proof}

\begin{lemma}\label{g2}
Let $g(s)=\log W(s)$. We have
\begin{equation*}
g''(s) \asymp \frac{1}{s\log s} \qquad (s\ge 2).
\end{equation*}
\end{lemma}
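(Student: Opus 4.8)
The plan is to compute $g''(s)$ directly from the product formula \eqref{W} and estimate the resulting prime sum. Since $g(s) = \log W(s) = \sum_p \log\bigl(1 + p^{-1}((1-p^{-1})^{-s}-1)\bigr)$, writing $L_p := -\log(1-p^{-1})$ we have, term by term, that the $p$-summand equals $\log\bigl(1 + p^{-1}(e^{sL_p}-1)\bigr)$. Differentiating twice in $s$ gives
\begin{equation*}
g''(s) = \sum_p \frac{L_p^2\, p^{-1} e^{sL_p}\,\bigl(1 + p^{-1}(e^{sL_p}-1)\bigr) - \bigl(L_p\, p^{-1} e^{sL_p}\bigr)^2}{\bigl(1 + p^{-1}(e^{sL_p}-1)\bigr)^2}
= \sum_p L_p^2 \, \frac{p^{-1} e^{sL_p}\,(1 - p^{-1})}{\bigl(1 + p^{-1}(e^{sL_p}-1)\bigr)^2}.
\end{equation*}
Each summand is manifestly positive, so $g''(s)>0$; it remains to pin down the order of magnitude. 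First I would record that $L_p = p^{-1} + O(p^{-2})$, so $L_p^2 \asymp p^{-2}$ and $e^{sL_p} = e^{s/p}\,(1+O(s/p^2))$ uniformly for $p \ge 2$ once $s \ll p^2$; for the finitely many small primes with $p^2 \ll s$ one checks the contribution is $O(s^{-1})$ directly, which is absorbed into the claimed bound. Setting $w_p := p^{-1}e^{s/p}$, the generic summand is $\asymp p^{-2}\, w_p/(1+w_p)^2$.

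The key step is then the estimate $\sum_p p^{-2}\, w_p/(1+w_p)^2 \asymp 1/(s\log s)$, which mirrors the computations \eqref{primesum1}–\eqref{primesum2} already carried out in the proof of Lemma \ref{We}. The function $w_p$ is large for $p$ small relative to $z$ (where $s = z\log z$), near $1$ for $p \asymp z$, and small for $p$ large; in all three regimes $w_p/(1+w_p)^2 \le \min(w_p, w_p^{-1})$, and the dominant contribution comes from $p$ in a dyadic neighbourhood of $z$. I would convert the sum to an integral via the prime number theorem, obtaining $\asymp \int_2^\infty x^{-2}\,\bigl(\min(w_x,w_x^{-1})\bigr)\,dx/\log x$ with $w_x = x^{-1}e^{s/x}$; substituting $x = s/w$ (so $dx = -s\,dw/w^2$, $\log x \asymp \log s$ throughout the relevant range $w \asymp 1$) turns this into $\asymp (s\log s)^{-1}\int_0^\infty \min(e^w/w, w/e^w)\, dw$, and the last integral converges to a positive absolute constant. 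This yields both the upper and lower bound $g''(s) \asymp 1/(s\log s)$.

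The main obstacle is the uniformity of the integral approximation near $x \asymp z$, where $w_x$ transitions through $1$: one must check that the summand $p^{-2} w_p/(1+w_p)^2$ does not oscillate too wildly between consecutive primes, so that the Riemann-sum-to-integral passage is legitimate with the stated error. This is handled exactly as in \eqref{primesum1}–\eqref{primesum2}: the ratio of consecutive values is $1+O(p^{-1}\log p + s p^{-2})$, which is $1+o(1)$ in the critical range $p \asymp z$, and partial summation against $\pi(x) = \li(x) + O(x/L(x))$ (or even just $\pi(x) \asymp x/\log x$, since only the order of magnitude is claimed) controls the error. The lower bound is the easier half: restricting the sum to primes in $[z, 2z]$, each term is $\gg z^{-2}$ and there are $\gg z/\log z$ of them, giving $g''(s) \gg z^{-1}/\log z \asymp 1/(s\log s)$ after noting $z \asymp s/\log s$ and $\log z \asymp \log s$.
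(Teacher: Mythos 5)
Your overall route is the same as the paper's: differentiate the Euler product \eqref{W} termwise (your exact formula for $g''(s)$, with the numerator simplifying to $L_p^2\,p^{-1}e^{sL_p}(1-p^{-1})$, agrees with the paper's), replace $(1-p^{-1})^{-s}$ by $e^{s/p}$, reduce to $\sum_p p^{-2}w_p/(1+w_p)^2$ with $w_p=p^{-1}e^{s/p}$, split at the transition point where $w_p\asymp 1$, and estimate the two ranges as in \eqref{primesum1}--\eqref{primesum2}. That is exactly what the paper does. However, the concrete details you add on top of this reduction contain genuine slips, and since the lemma asserts a two-sided bound they matter. First, the substitution $x=s/w$ does not produce $(s\log s)^{-1}\int_0^\infty\min(e^w/w,\,w/e^w)\,dw$: since $w_x=x^{-1}e^{s/x}$, one gets the integrand $\frac{1}{s\log(s/w)}\min\bigl(\frac{we^w}{s},\frac{s}{we^w}\bigr)$, whose transition occurs at $w\approx\log s-\log\log s$, not at $w\asymp 1$; the order of magnitude still comes out as $1/(s\log s)$, but the displayed identity and the claim about the relevant range are wrong as stated. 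Similarly, the contribution of primes with $p^2\ll s$ is not merely ``$O(s^{-1})$, absorbed into the claimed bound'' --- an $O(1/s)$ term would \emph{not} be absorbed into $O(1/(s\log s))$; what saves you is that this contribution is in fact exponentially small ($\ll\sqrt{s}\,e^{-c\sqrt{s}}$).

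The more serious problem is your lower bound. For $p\in[z,2z]$ (with $s=z\log z$) one has $w_p$ decreasing from $1$ at $p=z$ to about $z^{-1/2}$ at $p=2z$, so the summands $p^{-2}w_p/(1+w_p)^2\asymp z^{-2}w_p$ range down to $\asymp z^{-5/2}$; they are \emph{not} all $\gg z^{-2}$. Moreover the closing arithmetic is inconsistent: since $z\log z=s$, the quantity $z^{-1}/\log z$ is $\asymp 1/s$, not $1/(s\log s)$, so if your term-size claim were correct you would have proved $g''(s)\gg 1/s$, contradicting your own upper bound. The repair is to restrict to the window $|p-z|\le z/\log z$, where $w_p\asymp 1$ and each term is $\asymp z^{-2}$; by the prime number theorem this window contains $\asymp z/(\log z)^2$ primes, giving $g''(s)\gg 1/(z(\log z)^2)\asymp 1/(s\log s)$. (Equivalently, one can simply rely, as the paper does, on the two-sided integral comparisons in \eqref{primesum1}--\eqref{primesum2}, which you correctly identify as the model.) With these corrections your argument becomes the paper's proof.
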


\begin{proof}
From \eqref{W} we find that
\begin{equation*}
\begin{split}
g''(s) & = \sum_{p} p^{-1}(1-p^{-1}) \log^2(1-p^{-1}) \frac{(1-p^{-1})^{-s}}{(1+p^{-1}((1-p^{-1})^{-s}-1))^2} \\
& \asymp \sum_{p} p^{-3} \frac{(1-p^{-1})^{-s}}{(1+p^{-1}(1-p^{-1})^{-s})^2} 
\end{split}
\end{equation*}
Let $u$ be given by $u^{-1}(1-u^{-1})^{-s} = 1$, so that $s= u \log u + O(\log u)$. Then 
\begin{equation*}
\begin{split}
g''(s) & \asymp \sum_{p\le u} p^{-1} (1-p^{-1})^{s} +  \sum_{p> u} p^{-3} (1-p^{-1})^{-s} \\
& \asymp \sum_{p\le u} p^{-1} e^{-s/p} +  \sum_{p> u} p^{-3} e^{s/p} \asymp \frac{1}{s \log s},
\end{split}
\end{equation*}
where the last two sums are estimated just like in \eqref{primesum1} and \eqref{primesum2}.
\end{proof}

\section{Proof of Theorems \ref{thm1} and \ref{thm2}}\label{secthm}

Define the set of maximizers
\begin{equation*}
M := \{t \ge 1: \exists s >1 \mbox{ with }  \max_{x\ge 0} B(x) x^{s-1} = B(t) t^{s-1} \}.
\end{equation*}
\begin{lemma}\label{gap}
There is a constant $c>0$ such that for every $t \ge 1$ there is a $t_1 \in M$ with 
\begin{equation*}
|t-t_1| \le c \sqrt{t/y}. 
\end{equation*}
\end{lemma}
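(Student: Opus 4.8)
The plan is to control the gaps of $M$: I will show that every gap of $M$ lying near a point $t$ has length $O(\sqrt{t/y})$, which immediately gives the stated net property.

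\emph{Structure of $M$.} For $s>1$ the function $x\mapsto B(x)x^{s-1}$ is continuous on $[0,\infty)$, tends to $0$ at both ends (at $\infty$ by \eqref{old}), and so attains its maximum; let $\tau(s)\le T(s)$ be the least and greatest points where it does. Multiplying the two inequalities expressing maximality and cancelling shows that a maximizer for $s_1$ is $\le$ a maximizer for $s_2$ whenever $s_1<s_2$; hence $\tau$ and $T$ are non-decreasing, and one checks that $M$ is closed, unbounded (as $T(s)\to\infty$), and contains $1$ (examine $s\to 1^+$). Thus, for $t\notin M$, the point $t$ lies in a bounded component $(a_0,b_0)$ of $[1,\infty)\setminus M$ with $a_0,b_0\in M$; taking $s_0:=\sup\{s:T(s)\le a_0\}$ and passing to the one-sided limits of $T$ at $s_0$ (each is a maximizer for $s_0$, by continuity of the maximum in $s$) produces $a\le a_0<b_0\le b$ that are both maximizers of $x\mapsto B(x)x^{s_0-1}$. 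It therefore suffices to prove: if $a<b$ are simultaneous maximizers for some $s_0>1$, then $b-a=O(\sqrt{t/y})$ for $t\in[a,b]$; the nearest point of $M$ to a given $t\in(a_0,b_0)$ is then at most $(b-a)/2$.

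\emph{Reduction to a curvature estimate.} Fix $a<b$ with $B(a)a^{s_0-1}=B(b)b^{s_0-1}$. Lemma \ref{sylogy} applied at $a$ and at $b$ (both with parameter $s_0$) gives $s_0=y(a)\log y(a)+O(y(a))=y(b)\log y(b)+O(y(b))$, which already forces $b-a=O(1/t)$; in particular $y(x)\asymp Y:=y(a)$ throughout $[a,b]$. Put $\beta(t):=\min_{s\ge 0}W(s)/t^s=\exp(-\psi^*(t))$, so $\psi^*(t)=\max_{s\ge 0}(s\log t-g(s))$ with $g=\log W$. By Lemma \ref{BW}(i), $\log B\le\log\beta$ everywhere, and by Lemma \ref{BW}(ii) at $a$ and $b$, $\log B(x)\ge\log\beta(x)-\log(3s_0)$ at those two points. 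Subtracting the estimates at $a$ and $b$ and using $\log B(a)-\log B(b)=(s_0-1)(\log b-\log a)$ exactly, we obtain
\begin{equation*}
\psi^*(b)-\psi^*(a)=(s_0-1)(\log b-\log a)+O(\log s_0),
\end{equation*}
where $\log s_0=O(\log y)=O(t)$ since $\log y=t e^{-\gamma}$.

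\emph{Curvature and conclusion.} As $g$ is smooth and strictly convex, $\psi^*$ is $C^2$ with $(\psi^*)'(t)=\widehat{s}(t)/t$ and $(\psi^*)''(t)=t^{-2}( 1/g''(\widehat{s}(t))-\widehat{s}(t) )$, where $\widehat{s}(t)$ solves $g'(\widehat{s}(t))=\log t$. Using Lemma \ref{g2} ($g''\asymp 1/(s\log s)$) together with $\widehat{s}(t)\asymp y\log y$ (which follows from Lemmas \ref{sylogy} and \ref{BW}; the same input yields $\widehat{s}(a)=s_0+O(\sqrt y\,(\log y)^{3/2})$, via the strong convexity of $s\mapsto g(s)-s\log a$ near its minimum), one obtains $(\psi^*)''\gg y$ uniformly on $[a,b]$. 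Taylor-expanding the displayed identity about $a$: the quadratic remainder on the left is $\gg y(b-a)^2$; the linear terms cancel up to $O(\sqrt y(\log y)^{3/2}(b-a)/t)$; every other term is $O(t)$ or smaller. Writing $\Delta=b-a$ and using $\log y=t e^{-\gamma}$, so that $\sqrt y(\log y)^{3/2}/t\asymp\sqrt{yt}$, the identity becomes $y\Delta^2\ll\sqrt{yt}\,\Delta+t$, whence $\Delta=O(\sqrt{t/y})$. (For $1\le t\le t_0$ one argues crudely: the parameters $s_0$ that arise are bounded, the same steps then give $b-a=O(1)$, and $\sqrt{t/y}$ is bounded below there, so the claim holds after enlarging $c$.)

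\emph{Main obstacle.} The crux is that inserting \eqref{old} directly into $B(a)a^{s_0-1}=B(b)b^{s_0-1}$ yields only $b-a=O(1/t)$, because its error term $O(y/t^2)$ is far too large. The resolution is to replace $B$ on $M$ by the nearly equal quantity $\beta(t)=\min_s W(s)/t^s$, whose logarithm is concave with curvature of the exact order $y$ — this is precisely where Lemma \ref{g2} enters — so that the comparatively minuscule discrepancy $O(\log s_0)=O(t)$ between $\log B$ and $\log\beta$ on $M$ produces a displacement of only $O(\sqrt{t/y})$ in the location where equality holds. Carrying out the bootstrap that pins $\widehat{s}(a)$ near $s_0$ and establishing the uniform lower bound $(\psi^*)''\gg y$ on $[a,b]$ is the main technical labor.
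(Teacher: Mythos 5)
Your argument is essentially correct, but it reaches the conclusion by a genuinely different route. The paper argues directly from the given $t$: it takes $s$ to be the minimizer of $W(u)/t^u$, lets $t_1$ be \emph{any} maximizer of $B(x)x^{s-1}$ (hence $t_1\in M$), deduces $\log\frac{W(s)}{t_1^s}=\log\frac{W(s_1)}{t_1^{s_1}}+O(\log s)$ from Lemma \ref{BW}, gets $|s-s_1|=O(\sqrt{s}\log s)$ by a Taylor expansion at the minimizer together with Lemma \ref{g2}, and then transfers this to $|t-t_1|$ through the map $f=\exp(g')$ using $f'=f\,g''$. You instead bound the length of a whole gap of $M$: you exploit the monotone maximizer selection to produce two simultaneous maximizers $a\le a_0<b_0\le b$ for one parameter $s_0$, and compare the exact relation $\log B(a)-\log B(b)=(s_0-1)\log(b/a)$ with the Legendre-type transform $\psi^*(t)=\max_s(s\log t-g(s))$, whose curvature $(\psi^*)''\asymp y$ (again Lemma \ref{g2}) forces $b-a=O(\sqrt{t/y})$; your bootstrap $\widehat{s}(a)=s_0+O(\sqrt{y}(\log y)^{3/2})$ is the same estimate as the paper's $|s-s_1|=O(\sqrt s\log s)$ in dual form, and your bookkeeping ($\sqrt y(\log y)^{3/2}/t\asymp\sqrt{yt}$, $y\Delta^2\ll\sqrt{yt}\,\Delta+t$) is sound. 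What the paper's version buys is economy: it needs no structural facts about $M$ beyond its definition, whereas your reduction rests on assertions you only gesture at (closedness of $M$, $1\in M$, continuity of the maximum and the fact that one-sided limits of $T$ at $s_0$ are maximizers); these are provable (and the $1\in M$ claim, which needs $\inf_{x>1}(-\log B(x))/\log x>0$, only matters for the small-$t$ range you treat crudely anyway), but they are the main unproven overhead in your write-up. Both arguments also quietly use an a priori localization of the minimizer $\widehat{s}(t)\asymp y\log y$ (in the paper this comes from the computation behind Lemma \ref{Wy}), so that is not a defect specific to your approach. A side benefit of your route is that it yields slightly more than the lemma states, namely that every bounded gap of $M$ near $t$ has length $O(\sqrt{t/y})$.
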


\begin{proof}
Let $t\ge 1$ and let $s$ be given by $\min_u \frac{W(u)}{t^u}= \frac{W(s)}{t^s}$. Let $t_1 \in M$ satisfy
$\max_{x\ge 1} B(x) x^{s-1} = B(t_1)t_1^{s-1}$. Finally, define $s_1$ by $\min_u \frac{W(u)}{t_1^u}= \frac{W(s_1)}{t_1^{s_1}}$.
From Lemma \ref{BW} we find
\begin{equation*}
\frac{W(s)}{3s t_1^s} \le B(t_1) \le \frac{W(s_1)}{t_1^{s_1}} \le \frac{W(s)}{ t_1^s} ,
\end{equation*}
so
\begin{equation*}
\log \frac{W(s)}{t_1^s} = \log \frac{W(s_1)}{t_1^{s_1}} + O(\log s).
\end{equation*}
By Taylor's theorem there is an $s_0$ between $s$ and $s_1$ with
\begin{equation*}
\log \frac{W(s)}{t_1^s} = \log \frac{W(s_1)}{t_1^{s_1}} + \frac{g''(s_0)}{2} (s-s_1)^2,
\end{equation*}
where $g(u)=\log W(u)$. Combining the last two equations with Lemma \ref{g2} we obtain
\begin{equation*}
|s-s_1| = O(\sqrt{s} \log s).
\end{equation*}

Let $f(u)=\exp(g'(u))$. From the definition of $s$ and $s_1$ we have 
$t=f(s)$ and $t_1=f(s_1)$. Thus $|t-t_1| \le |s-s_1| \max_I f'(u)$, where $I$ is 
the interval with endpoints $s$, $s_1$. Now $f'(u)=f(u)\,g''(u)$, so Lemma \ref{g2} yields
\begin{equation*}
|t-t_1| =O\left(\frac{t \sqrt{s}\log s}{s \log s}\right) = O\left(\sqrt{t/y}\right),
\end{equation*}
by Lemma \ref{sylogy}.
\end{proof}

\begin{proof}[Proof of Theorem \ref{thm1}]
If $t \in M$ then the result follows from Lemma \ref{BW} (ii) and Lemma \ref{Wy} with $a_j = -c_j e^{j\gamma}$.
If $t \notin M$, the result follows from Lemma \ref{gap} and the monotonicity of $B(t)$.
\end{proof}

\begin{proof}[Proof of Theorem \ref{thm2}]
We apply Lemma \ref{We} with $u=y$ and $v=y\log y$. For $s=y\log y +O(y)$,
\begin{equation*}
\begin{split}
\log \frac{W(s)}{t^s} = & -s \log t -s \sum_{p\le y}\log(1-p^{-1})  - \sum_{p\le y} \log p  +O\left(\frac{\log_2 y}{\log y} \right)\\
& \ +\sum_{p\le y} \log \left(1 + p\, e^{-s/p} \right) 
+ \sum_{y < p\le y \log y} \log \left(1 + p^{-1}\, e^{s/p}\right) \\
= & -y + I(y,s) + O\left(\frac{y}{L(y)^c} \right)  ,
\end{split}
\end{equation*}
by a strong form of Mertens' Theorem \cite{Vino} and a standard application of the prime number theorem.
Under the assumption of the Riemann hypothesis, the error term can be replaced by $O(\sqrt{y} \, (\log y)^2)$. 
If $t\in M$ and $\max_{x\ge 0} B(x) x^{s-1} = B(t) t^{s-1}$, then $s=y\log y + o(y)$, by an argument 
like in the proof of Lemma \ref{sylogy}, but this time using Theorem \ref{thm1} with $m=2$ instead of \eqref{old}.
Therefore  Lemma \ref{BW} implies
\begin{equation*}
\log B(t) = O(t) + \min_{u\in J} \frac{W(u)}{t^u} = O(t) -y +  \min_{u\in J} I(y,u) +  O\left(\frac{y}{L(y)^c} \right) ,
\end{equation*}
where $J=[y\log y - y, y\log y +y]$.
If $t \notin M$, there is a $t_1 \in M$ with $|t-t_1| = O(\sqrt{t/y})$ by Lemma \ref{gap}.
For $s=y\log y +O(y)$ and $y_1 := e^{t_1 \, e^{-\gamma}}=y+O(\sqrt{t\,y})$ we have $ I(y_1,s)=I(y,s) + O(\sqrt{t\,y})$.
Thus the result follows again from the monotonicity of $B(t)$.
\end{proof}

\end{document}